\definecolor{brown}{cmyk}{0,0.83,1,0.6}
\newcommand{\rmn}[1]{\if#11I\else {\if#12I\hspace{-0.12ex}I\hspace{-0.08ex}\else {\if #13I\hspace{-0.12ex}I\hspace{-0.12ex}I\hspace{-1.5ex} \else{\if#14I\hspace{-0.16ex}V\hspace{-2.4ex} \else{\if#15V\hspace{-3.0ex} \else{\if#16V\hspace{-0.12ex}I\hspace{-3.0ex} \else{\if#16V\hspace{-0.12ex}I  \else{\if#17V\hspace{-0.12ex}I\hspace{-0.12ex}I\hspace{-3.0ex}  \else{\if#18V\hspace{-0.12ex}I\hspace{-0.12ex}I\hspace{-0.12ex}I\hspace{-3.8ex} \else{\if#19I\hspace{-0.12ex}X\hspace{-3.8ex} \fi}\fi} \fi}\fi}\fi}\fi} \fi} \fi} \fi}\fi}
 \newcommand{\sr}[1]{{\mathcal #1}}
 \newcommand{\eqn}[1]{(\ref{eqn:#1})}
 \newcommand{\lem}[1]{Lemma~\ref{lem:#1}}
 \newcommand{\cor}[1]{Corollary~\ref{cor:#1}}
 \newcommand{\thr}[1]{Theorem~\ref{thr:#1}}
 \newcommand{\rem}[1]{Remark~\ref{rem:#1}}
 \newcommand{\fig}[1]{Figure~\ref{fig:#1}}
\newcommand{\app}[1]{Appendix~\ref{app:#1}}
\newcommand{\enu}[1]{(\ref{enu:#1})}
\newcommand{\figt}[1]{\ref{fig:#1}}
 \newcommand{\pend}{\hfill \thicklines \framebox(6.6,6.6)[l]{}}
 \newenvironment{proof}{\noindent {\sc  Proof.} \rm}{\pend}
 \newenvironment{proof*}[1]{\noindent {\sc  #1.} \rm}{\pend}
 \newtheorem{theorem}{Theorem}[section]
 \newtheorem{lemma}{Lemma}[section]
 \newtheorem{remark}{Remark}[section]
 \newtheorem{corollary}{Corollary}[section]
 \newcommand{\setnewcounter} {
 \setcounter{subsection}{0}
 \setcounter{equation}{0}
 \setcounter{conjecture}{0}
 \setcounter{assumption}{0}
 \setcounter{question}{0}
 \setcounter{definition}{0}
 \setcounter{theorem}{0}
 \setcounter{corollary}{0}
 \setcounter{lemma}{0}
 \setcounter{proposition}{0}
 \setcounter{remark}{0}
}
 \title{\bf Diffusion limit for the stationary distribution of a history-dependent two-level M/M/$1$ queue}
\author{
Masahiro Kobayashi${}^{1}$, Masakiyo Miyazawa${}^{2}$, Yutaka Sakuma${}^{3}$
\\ 
{\small ${}^{1}$ Tokai University,
 \quad ${}^{2}$ Tokyo University of Science, \quad ${}^{3}$ National Defense Academy of Japan
}
\\
{\small Email: 21506@ms.dendai.ac.jp}
}
\date{\today \,\,final ver}
\begin{document}
\maketitle
\begin{abstract}
Recently, \citet{AtarMiya2025} introduced a multi-level GI/G/1 queue with a finite number of levels, where both the arrival and service rates depend on the level corresponding to the current queue length. For this model, they proved that the diffusion limit of its queue length process in heavy traffic is the level-dependent reflected Brownian motion of \cite{Miya2024b}. In a subsequent study, \citet{KobaMiyaSaku2025} derived the corresponding diffusion limit of the stationary distribution. These studies are motivated by the control of service capacity depending on the queue length. We are interested in the more general case where this control may also depend on the history of the queue length. As the first step toward such a generalization, we specialize the multi-level GI/G/$1$ queue to a two-level M/M/$1$ queue. We then extend the dynamics of this model so that its arrival and service rates depend not only on the current queue length but also on the recent history of queue lengths. Under the stability condition for this model, we first compute its stationary distribution in closed form, then derive its diffusion limit in heavy traffic. Finally, using this diffusion limit, we derive approximation formulas for the stationary distribution and then numerically assess their accuracy.
\end{abstract}
{\bf Keywords:} History-dependent queue, level-dependent queue, diffusion approximation, stationary distribution, approximation formulas.

\section{Introduction}
\label{sec:Introduction}
In recent years, dynamic mechanisms to adjust processing capacity and resource allocation based on the system state have become increasingly important in various domains, such as cloud computing, communication networks, manufacturing lines, and healthcare services. For instance, many real-world systems implement controls that increase the processing speed or the number of servers as the queue length grows.
It has also been widely observed that arrival rates may vary with the system state. In web services and medical appointment systems, where congestion levels are visible, users may avoid congested periods or choose less congested time slots, thereby exhibiting self-regulation. In such situations, the effective arrival rate depends on the queue length, making the arrival process state-dependent.

To address such practical needs, a body of research exists on queueing models with adjustable processing capacity that depends on congestion. For instance, Altalejo et al. \cite{ArtaEconLope2005}, motivated by server operations in data centers, analyzed an M/M/c-type model with dynamically activated and deactivated servers, utilizing a quasi-birth-and-death process structure that limits the number of simultaneously active servers. Gandhi et al. \cite{GandBaltAdan2010} and Phung-Duc and Kawanishi \cite{TuanKawa2020} introduced a multi-server model that accounts for setup delays due to server switching, and computed performance measures. Schwartz et al. \cite{schw} proposed a multi-server setup queueing model with busy, idle, and off states for servers.
Their work was motivated by energy-saving and by the dependencies among transitions in queue length and the three states of servers. Building on that work, Mitrani \cite{Mitr2013} developed a more general model that incorporates both history dependence and non-negligible server setup times, providing an exact analytical solution for this more complex scenario. While these studies provide performance metrics for the models, understanding the relationships between these metrics (e.g., mean queue length, waiting time) and the fundamental parameters (e.g., arrival and service rates) remains challenging.

One powerful technique for tackling this challenge is heavy traffic analysis, which approximates a queueing process by a diffusion process under conditions where the system is heavily loaded. This approach often reveals the fundamental relationships between system parameters and performance metrics in a more tractable form than exact analysis. In a related line of research, queueing models where arrival and service rates depend on the queue length have attracted significant attention. To analyze their limiting behavior, continuous models such as reflected diffusion processes with state-dependent coefficients have been studied. Specifically, Atar and Miyazawa \cite{AtarMiya2025} proved that the diffusion limit of the queue length process in heavy traffic is a reflected diffusion process whose drift and diffusion coefficients depend on the level, and showed that this process is uniquely obtained as a weak solution of the associated stochastic differential equation (SDE). Furthermore, Kobayashi et al.~\cite{KobaMiyaSaku2025} derived closed-form expressions for the limit of the diffusion-scaled stationary distribution of a single-server queue with level-dependent transitions under heavy traffic conditions. In these studies, the state space of the models is essentially one-dimensional, and the transition structures do not incorporate additional information such as history or background states.

Motivated by the aforementioned studies, our study focuses on the queueing model that depends not only on the level but also on the queue-length history, and derives the diffusion approximation of the stationary distribution. We consider a queueing model in which the service rate is adjusted not only by the current queue length but also by the recent trajectory of the queue-length process. This allows us to capture more complex and realistic control mechanisms. As a first step, we introduce a two-level M/M/$1$ model where the transition rates depend on both the current congestion level and its recent history of queue length. We adopt the M/M/$1$ framework as a baseline for this initial analysis due to its fundamental importance and tractable structure. For this model, we first compute its stationary distribution in closed form. We then derive its diffusion limit in heavy traffic, which provides clearer insights into the relationship between system parameters and performance.
While an exact stationary distribution provides a complete characterization of the system's long-run behavior, its expression can be complex. The diffusion approximation, in contrast, often yields a more tractable formula that offers clearer insights into the fundamental relationships between system parameters and performance metrics, which is particularly valuable for system design and control. 

The main contributions of this study are as follows:
\begin{itemize}
  \item We construct an M/M/1 queue that incorporates history and level dependence, and derive its stationary distribution in closed form.
  \item We obtain the diffusion limit of the stationary distribution of this M/M/$1$ queue under heavy traffic conditions for each background state. Notably, some of these limiting distributions exhibit nonstandard forms that are neither exponential nor uniform, distinguishing them from results typically seen in diffusion limit studies.
  \item Using the resulting density functions, we clarify the relationships between key performance measures and the underlying model parameters.
  \item Through numerical examples, we assess the accuracy of the diffusion approximation and demonstrate its practical effectiveness.
\end{itemize}

The remainder of this paper is organized as follows. Section 2 formulates the M/M/1 model with a state- and history-dependent transition structure and derives the stationary distribution in closed form. Section 3 derives the diffusion limit of the stationary distribution under heavy traffic conditions. Section 4 provides the proofs of the main theorems presented in the preceding sections. Section 5 presents numerical examples to assess the accuracy of our diffusion approximation and validate the theoretical results. Section 6 concludes the paper and outlines directions for future research.

\section{History-dependent two-level M/M/$1$ queue}
\setnewcounter
\label{sec:model}

In this section, we formally define our queueing model. We introduce some standard notations. Let $\mathbb{R}$ and $\mathbb{Z}$ be the sets of all real numbers and all integers, respectively, and
\begin{align*}
&\mathbb{R}_{+} =\{x \in \mathbb{R} ; x \ge 0\}, && \mathbb{R}_{>0} = \{x \in \mathbb{R}; x > 0\},  && \mathbb{R}_{-} =\{x \in \mathbb{R} ; x \le 0\},\\
&\mathbb{R}_{<0} = \{x \in \mathbb{R}; x < 0\}, &&\mathbb{Z}_{+} = \{x \in \mathbb{Z} ; x \ge 0\}, &&\mathbb{N} = \{x \in \mathbb{Z}; x > 0\}.
\end{align*}

Our queueing model extends the standard M/M/1 queue by allowing the arrival and service rates to depend on the history and level of the queue length.
Let $L(t)$ be the queue length at time $t \in \mathbb{R}_{+}$ (including any customer in service), and let $\ell_{d}$ and $\ell_{u}$ be positive integers satisfying $\ell_{d} < \ell_{u}$.
We define two overlapping regions, which we refer to as levels. The set $\{0,1,\dots,\ell_{u}\}$ is called a level 1, and the set $\{\ell_{d},\ell_{d} + 1,\dots\}$ is called a level 2.
Obivously, these two levels intercect on $ \{\ell_{d},\ell_{d} + 1,\dots,\ell_{u}\}$.
When the queue length belongs to $\{\ell_{d},\ell_{d} + 1,\dots,\ell_{u}\}$, additional information is needed to determine the effective arrival and service rates.
To provide this information, we introduce a background state process $\{B(t); t \in \mathbb{R}_{+}\}$, which takes values in $\{1,2\}$.
The state of the background process changes based on the queue length process $\{L(t); t \in \mathbb{R}_{+}\}$ as follows:
\begin{itemize}
    \item If $B(t-) = 1$ and an arrival causes the queue length to up-cross the threshold $\ell_u$ (i.e., $L(t-) = \ell_{u}$ and $L(t) = \ell_{u} + 1$), then the background state switches to $B(t) = 2$.
    \item If $B(t-) = 2$ and a service completion causes the queue length to down-cross the threshold $\ell_d$ (i.e., $L(t-) = \ell_{d}$ and $L(t) = \ell_{d}-1$), then the background state switches to $B(t) = 1$.
\end{itemize}
For each background state $i \in \{1,2\}$, we associate constants arrival and service rates, $\lambda_{i}, \mu_{i} \in \mathbb{R}_{>0}$.
The effective rates at time $t$ are then given by $\lambda_{B(t)}$ and $\mu_{B(t)}$.

These structures introduce history dependence. For instance, consider a time $t_1$ when the queue length is $L(t_1) = \ell_u$. If the system reached this state via a departure from $\ell_u+1$, the background state would be $B(t_1)=2$, making the effective rates $(\lambda_2, \mu_2)$. In contrast, if at another time $t_2$ the system reached $L(t_2) = \ell_u$ via an arrival from $\ell_u-1$, the background state would be $B(t_{2})=1$, and the rates would be $(\lambda_1, \mu_1)$. Thus, the effective rates depend not only on the current state but also on the history of the process. For this reason, we refer to our model as a history-dependent two-level M/M/$1$ queue.
Figures \figt{sample path} and \figt{transition} illustrate a sample path and the state transition diagram of the model, respectively.

\begin{figure}[h]
 	\centering
	\includegraphics[bb=0 0 4000 1300, scale = 0.14]{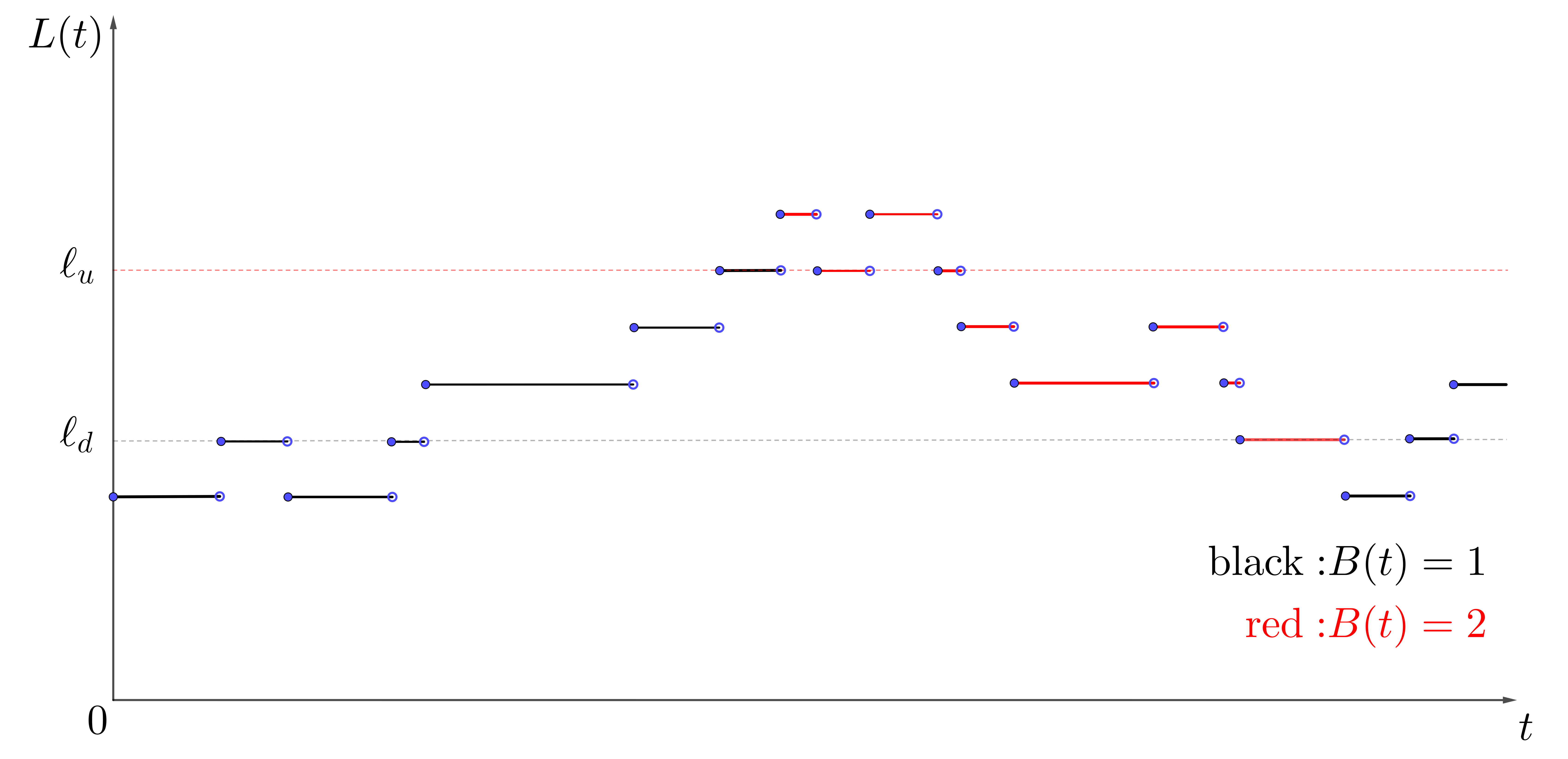}
	\caption{Sample path of $L(t)$}
	\label{fig:sample path}
\end{figure}

\begin{figure}[h]
 	\centering
	\includegraphics[height=0.21\textheight]{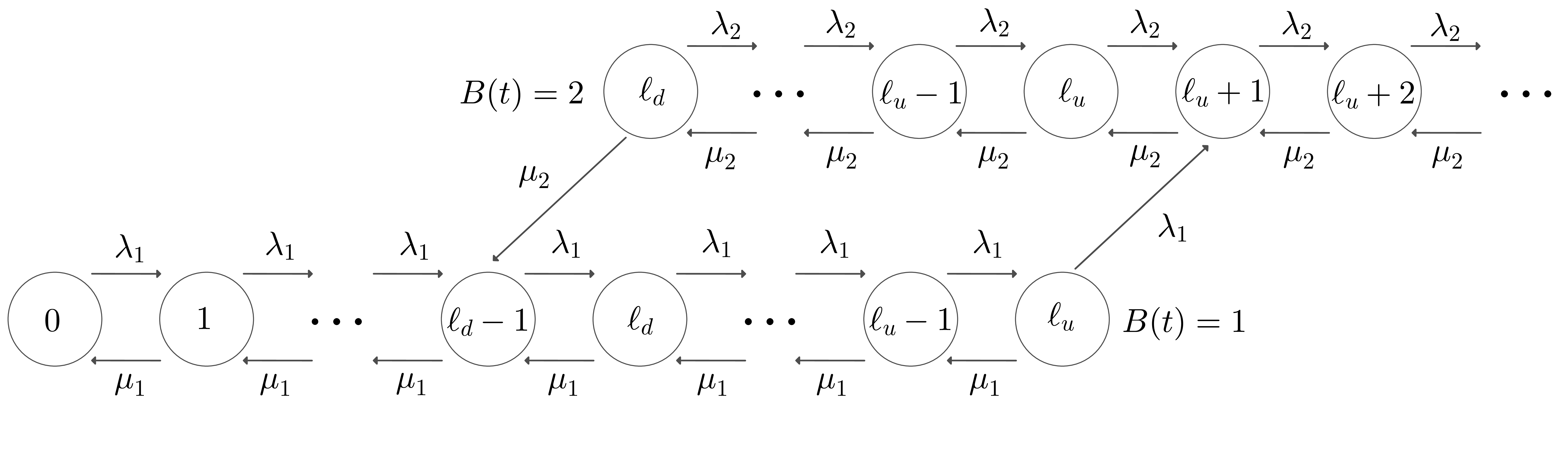}
	\caption{Transition diagram of $(L(t),B(t))$}
	\label{fig:transition}
\end{figure}

We now formulate the joint process $\{(L(t), B(t)); t \in \mathbb{R}_{+}\}$ as a continuous-time Markov chain and proceed to derive its stationary distribution. 
The state space of this process is denoted by $S = S_{1} \cup S_{2}$, where $S_{1}  = \{0,1,\dots,\ell_{u}\} \times \{1\}$ and $S_{2}  =  \{\ell_{d},\ell_{d} + 1, \dots \}\times \{2\}$.
Let $Q = \{q_{(\ell,k), (\ell',k')}; (\ell,k),(\ell',k') \in S\}$ be a square matrix whose off-diagonal elements are given by, for $(\ell,k) \neq (\ell',k')$,
\begin{align*}
q_{(\ell,k), (\ell',k')} = 
\begin{cases}
\lambda_{i}, & (\ell,k),(\ell',k')\in S_{i}, \ell' = \ell + 1, i=1,2,\\ 
\mu_{i}, & (\ell,k),(\ell',k')\in S_{i}, \ell' = \ell - 1, i=1,2,\\
\lambda_{1}, & (\ell,k) \in S_{1}, (\ell',k') \in S_{2}, \ell = \ell_{u},  \ell ' = \ell_{u} + 1,\\
\mu_{2}, & (\ell,k) \in S_{2}, (\ell',k') \in S_{1},\ell = \ell_{d}, \ell' = \ell_{d} - 1,\\
0, & \text{otherwise,}
\end{cases}
\end{align*}
and the diagonal elements are given by $q_{(\ell,k), (\ell,k)} = -\sum_{(\ell',k') \neq(\ell,k)} q_{(\ell,k), (\ell',k')}$.
Then, the matrix $Q$ is the transition rate matrix of $\{(L(t),B(t))\}$.

Let $\rho_{i} = \dfrac{\lambda_{i}}{\mu_{i}}$ for $i= 1,2$ and $\rho_{12} = \dfrac{\lambda_{1}}{\mu_{2}}$. If a stationary distribution for the process $\{(L(t),B(t))\}$ exists, we denote it by $\pi = (\pi(\ell,k); (\ell,k) \in S)$.
It is easy to see that the stationary distribution $\pi$ exists if and only if 
\begin{align}
\label{eqn:stability}
\rho_{2} < 1.
\end{align}
Notably, we do not require the condition $\rho_{1} < 1$. 
Throughout this paper, we assume the stability condition \eqn{stability}.
 We define a set $\partial S$ as
\begin{align*}
\partial S = \{(0,1), (\ell_{d}-1,1), (\ell_{d},2), (\ell_{u},1),(\ell_{u}+1,2) \}.
\end{align*}
The stationary distribution $\pi$ then satisfies the following balance equations:
\begin{align}
\label{eqn:sta1}
&(\lambda_{k} + \mu_{k}) \pi(\ell,k) =  \lambda_{k} \pi(\ell - 1,k) + \mu_{k}\pi(\ell+1,k), \quad (\ell,k) \in S \setminus \partial S,\\
\label{eqn:sta2}
&\lambda_{1} \pi(0,1) =  \mu_{1} \pi(1,1),\\
\label{eqn:sta3}
&(\lambda_{1} + \mu_{1}) \pi(\ell_{d}-1,1) =  \lambda_{1}\pi(\ell_{d}- 2,1) + \mu_{1} \pi(\ell_{d},1) + \mu_{2} \pi(\ell_{d},2),\\
\label{eqn:sta4}
&(\lambda_{2} + \mu_{2}) \pi(\ell_{d},2) =  \mu_{2} \pi(\ell_{d}+1,2), \\
\label{eqn:sta5}
&(\lambda_{1} + \mu_{1}) \pi(\ell_{u},1) =  \lambda_{1} \pi(\ell_{u}- 1,1),\\
\label{eqn:sta6}
&(\lambda_{2} + \mu_{2}) \pi(\ell_{u} + 1,2) =  \lambda_{2} \pi(\ell_{u},2) + \mu_{2} \pi(\ell_{u}+2,2) + \lambda_{1} \pi(\ell_{u},1).
\end{align}

To facilitate the analysis, we partition the state space $S$ into four subsets, $S_{1,1},S_{1,2},S_{2,1},S_{2,2}$, defined by
\begin{align*}
&S_{1,1} = \{0,1,\dots,\ell_{d}-1\} \times \{1\}, && S_{2,1} = \{\ell_{d},\ell_{d}+1,\dots,\ell_{u}\} \times \{1\},\\
&S_{1,2} = \{\ell_{d},\ell_{d}+1,\dots,\ell_{u}\} \times \{2\}, 
&&S_{2,2} = \{\ell_{u}+1,\ell_{u} + 2, \dots \} \times \{2\}.
\end{align*}
For a condition {\rm A}, let $1(\mathrm{A})$ be 
\begin{align*}
1({\rm A}) = 
\begin{cases}
1, & \mbox{if A is true}, \\
0, & \mbox{if A is false}.
\end{cases}
\end{align*}
For convenience, let $\phi$ be the auxiliary function on $\mathbb{Z}_{+}$ defined by
\begin{align}
\label{eqn:phi k}
\phi(i) = \sum_{j = 0}^{i} \rho_{1}^{j} = \frac{1 - \rho_{1}^{i + 1}}{1 - \rho_{1}}1(\rho_{1} \neq 1) + (i+1) 1(\rho_{1} = 1), \qquad i \in \mathbb{Z}_{+}.
\end{align}
Then, solving the balance equations \eqn{sta1}--\eqn{sta6} yields the following theorem.

\begin{theorem}
\label{thr:sta exact}
Under the stability condition \eqn{stability}, the stationary distribution $\pi$ is given by the following expressions:
\begin{align}
\label{eqn:sta exact1}
\pi(\ell,k) =  
\begin{cases}
\rho_{1}^{\ell} \pi(0,1),&(\ell,k) \in S_{1,1},\\
\dfrac{\phi(\ell_{u}) - \phi(\ell-1)}{\phi(\ell_{u}- \ell_{d} + 1)} \pi(0,1), &(\ell,k) \in S_{2,1},\\
\dfrac{\rho_{1}^{\ell_{u}}}{\phi(\ell_{u} - \ell_{d} + 1)} \dfrac{1 - \rho_{2}^{\ell - (\ell_{d}-1)}}{1 - \rho_{2}} \rho_{12} \pi(0,1), &(\ell,k) \in S_{1,2},\\
\dfrac{\rho_{1}^{\ell_{u}}}{\phi(\ell_{u}- \ell_{d} + 1)}\dfrac{1 - \rho_{2}^{\ell_{u} - \ell_{d} + 2}}{1 - \rho_{2}} \rho_{12} \rho_{2}^{\ell - (\ell_{u} + 1)}\pi(0,1), &(\ell,k) \in S_{2,2},
\end{cases}
\end{align}
where the normalization constant $\pi(0,1)$ is given by
\begin{align}
\label{eqn:pi0 exact}
\pi(0,1) = \left(\phi(\ell_{d}-1) + \sum_{\ell=\ell_{d}}^{\ell_{u}}\dfrac{\phi(\ell_{u}) - \phi(\ell-1)}{\phi(\ell_{u}- \ell_{d} + 1)}  +  \dfrac{\rho_{12}\rho_{1}^{\ell_{u}}\left(\ell_{u} - \ell_{d} + 2 \right)}{\phi(\ell_{u} - \ell_{d} + 1)(1 - \rho_{2})}  \right)^{-1}.
\end{align}
\end{theorem}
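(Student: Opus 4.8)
The plan is to exploit the skip-free (quasi-birth-and-death) structure of $Q$: the state space $S$ is essentially a half-line carrying a two-valued background, and every transition changes the queue length by $\pm 1$, so suitably chosen ``vertical'' cuts of $S$ yield one-dimensional flow-balance identities that can be solved region by region. Under \eqn{stability} the chain is irreducible and positive recurrent, so its stationary distribution is the \emph{unique} probability solution of the balance equations \eqn{sta1}--\eqn{sta6}; because any cut-balance identity is a consequence of \eqn{sta1}--\eqn{sta6}, it suffices to show that the cut identities together with $\sum_{(\ell,k)\in S}\pi(\ell,k)=1$ force \eqn{sta exact1}--\eqn{pi0 exact}. (Alternatively one may just substitute \eqn{sta exact1}--\eqn{pi0 exact} into \eqn{sta1}--\eqn{sta6} and verify directly; the constructive route below additionally explains where the formulas come from.)

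First, on $S_{1,1}$: for $0\le\ell\le\ell_{d}-2$ the only transitions across the cut separating $\{(\ell',1):\ell'\le\ell\}$ from the rest of $S$ are the $\lambda_{1}$ up-jump and the $\mu_{1}$ down-jump between $(\ell,1)$ and $(\ell+1,1)$, giving $\lambda_{1}\pi(\ell,1)=\mu_{1}\pi(\ell+1,1)$; combined with \eqn{sta2} this yields $\pi(\ell,1)=\rho_{1}^{\ell}\pi(0,1)$ for all $(\ell,1)\in S_{1,1}$. Next I record the single global interface identity obtained by cutting $S_{1}$ from $S_{2}$: the only transitions between the two parts are $(\ell_{u},1)\to(\ell_{u}+1,2)$ at rate $\lambda_{1}$ and $(\ell_{d},2)\to(\ell_{d}-1,1)$ at rate $\mu_{2}$, so $\lambda_{1}\pi(\ell_{u},1)=\mu_{2}\pi(\ell_{d},2)$, i.e. $\pi(\ell_{d},2)=\rho_{12}\pi(\ell_{u},1)$.

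For $S_{2,1}$, cutting $\{(\ell',1):\ell\le\ell'\le\ell_{u}\}$ from the rest of $S$ (for $\ell_{d}\le\ell\le\ell_{u}$) gives $\lambda_{1}\pi(\ell-1,1)=\mu_{1}\pi(\ell,1)+\lambda_{1}\pi(\ell_{u},1)$, since the only crossing transitions are the $\lambda_{1}/\mu_{1}$ jumps between $(\ell-1,1)$ and $(\ell,1)$ together with the ``escaping'' jump $(\ell_{u},1)\to(\ell_{u}+1,2)$; the case $\ell=\ell_{u}$ is exactly \eqn{sta5}. Treating $\pi(\ell_{u},1)$ as an unknown constant, this is an affine first-order recursion in $\ell$ started from the already-known value $\pi(\ell_{d}-1,1)=\rho_{1}^{\ell_{d}-1}\pi(0,1)$; solving it and then imposing the self-consistency requirement that the recursion reproduce $\pi(\ell_{u},1)$ at $\ell=\ell_{u}$ pins down $\pi(\ell_{u},1)$, and rewriting the resulting geometric sums via \eqn{phi k} gives the $S_{2,1}$ line of \eqn{sta exact1} (this form also absorbs the degenerate case $\rho_{1}=1$, which is precisely why $\phi$ is defined as in \eqn{phi k}). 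Finally, on $S_{2}=\{(\ell,2):\ell\ge\ell_{d}\}$, cutting $\{(\ell',2):\ell'\ge\ell\}$ from the rest gives $\mu_{2}\pi(\ell,2)=\lambda_{2}\pi(\ell-1,2)+\lambda_{1}\pi(\ell_{u},1)$ for $\ell_{d}+1\le\ell\le\ell_{u}+1$ (the injection term $\lambda_{1}\pi(\ell_{u},1)$ crosses precisely when $\ell\le\ell_{u}+1$) and $\mu_{2}\pi(\ell,2)=\lambda_{2}\pi(\ell-1,2)$ for $\ell\ge\ell_{u}+2$. The first recursion, started from $\pi(\ell_{d},2)=\rho_{12}\pi(\ell_{u},1)$ and summed geometrically, gives the $S_{1,2}$ line of \eqn{sta exact1} and the value $\pi(\ell_{u}+1,2)$; the second gives the pure geometric tail on $S_{2,2}$, whose summability is exactly where $\rho_{2}<1$ is used. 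Imposing $\sum_{(\ell,k)\in S}\pi(\ell,k)=1$ and adding the four (geometric / arithmetic--geometric) blocks then produces \eqn{pi0 exact}.

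The conceptual content is light — everything reduces to solving scalar linear recursions — but the step needing the most care is the $S_{2,1}$ recursion: it is the one cut whose ``out-rate'' is not a plain birth--death rate, because of the background switch at $\ell_{u}$, so the constant $\pi(\ell_{u},1)$ enters inhomogeneously and must be determined by self-consistency rather than by a boundary value alone. Keeping the $\rho_{1}=1$ case inside the same closed form through $\phi$, and the bookkeeping in the final normalization sum, are the remaining places where one must be attentive.
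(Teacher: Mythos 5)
Your proposal is correct, and the route it takes is genuinely different from (and arguably cleaner than) the paper's. The paper works directly from the pointwise balance equations \eqn{sta1}--\eqn{sta6}: it starts in the tail $S_{2,2}$, substitutes into \eqn{sta6} to express $\pi(\ell_u+1,2)$, derives $\pi(\ell_d,2)=\rho_{12}\pi(\ell_u,1)$ by combining \eqn{sta4} and \eqn{sta6}, and then for $S_{2,1}$ propagates a \emph{backward} two-term recursion from \eqn{sta5} that produces a telescoping product of ratios of partial geometric sums (its equation (A.2)); only at the end does it detect that \eqn{sta3} forces $\pi(\ell_d-1,1)=\rho_1\pi(\ell_d-2,1)$, which unlocks $S_{1,1}$ by induction. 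You instead sum the balance equations over half-lines and blocks, i.e.\ use flow-conservation across cuts. This buys you three things. First, the interface identity $\lambda_1\pi(\ell_u,1)=\mu_2\pi(\ell_d,2)$ is immediate from the $S_1$--$S_2$ cut rather than requiring the paper's two-equation combination. Second, the $S_{2,1}$ cut yields the affine \emph{first-order} recursion $\lambda_1\pi(\ell-1,1)=\mu_1\pi(\ell,1)+\lambda_1\pi(\ell_u,1)$, which is much easier to iterate forward than the paper's (A.2) and makes the appearance of $\phi$ transparent; imposing self-consistency at $\ell=\ell_u$ then fixes $\pi(\ell_u,1)=\rho_1^{\ell_u}\pi(0,1)/\phi(\ell_u-\ell_d+1)$ cleanly, and this recursion automatically implies $\pi(\ell_d-1,1)=\rho_1\pi(\ell_d-2,1)$ rather than extracting it separately from \eqn{sta3}. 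Third, the $S_2$ cut uniformly yields the inhomogeneous geometric recursion $\mu_2\pi(\ell,2)=\lambda_2\pi(\ell-1,2)+\lambda_1\pi(\ell_u,1)1(\ell\le\ell_u+1)$, which gives both the $S_{1,2}$ and $S_{2,2}$ blocks at once. The place where you still owe the reader the explicit algebra is the same place the paper does: summing the four blocks to obtain \eqn{pi0 exact}. That step is routine but worth displaying, since it is the only place $\rho_2<1$ is actually needed. Both arguments are elementary and arrive at the same formulas; yours avoids the backward-ratio bookkeeping of the paper's (A.2) at the modest price of solving for $\pi(\ell_u,1)$ by self-consistency rather than reading it off a local boundary equation.
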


\begin{remark}
\label{rem:closed form}
It follows from \eqn{phi k} that \eqn{sta exact1} can be  written for $(\ell,k) \in S_{2,1}$ as
\begin{align*}
&\pi(\ell,k) = \left(\dfrac{\rho_{1}^{\ell} - \rho_{1}^{\ell_{u}+1}}{1 - \rho_{1}^{\ell_{u}  - \ell_{d} + 2}}1(\rho_{1} \neq 1) 
 + \dfrac{(\ell_{u}+1) - \ell}{\ell_{u} - \ell_{d}+2}1(\rho_{1} = 1)\right)\pi(0,1),
\end{align*}
for $(\ell,k) \in S_{1,2}$ as
\begin{align*}
\pi(\ell,k) = 
\left(\dfrac{\rho_{1}^{\ell_{u}} - \rho_{1}^{\ell_{u}+1}}{1 - \rho_{1}^{\ell_{u} - \ell_{d} + 2}}1(\rho_{1} \neq 1) +  \dfrac{1}{\ell_{u} - \ell_{d} + 2}1(\rho_{1} = 1)\right)\dfrac{1 - \rho_{2}^{\ell - (\ell_{d}-1)}}{1 - \rho_{2}} \rho_{12} \pi(0,1),
\end{align*}
and for $(\ell,k) \in S_{2,2}$ as
\begin{align*}
\pi(\ell,k) = \left(\dfrac{\rho_{1}^{\ell_{u}} - \rho_{1}^{\ell_{u}+1}}{1 - \rho_{1}^{\ell_{u} - \ell_{d} + 2}}1(\rho_{1} \neq 1) +  \dfrac{1}{\ell_{u}- \ell_{d} + 2}1(\rho_{1} = 1)\right)\dfrac{1 - \rho_{2}^{\ell_{u} - \ell_{d}}}{1 - \rho_{2}} \rho_{12} \rho_{2}^{\ell - \ell_{u} + 1} \pi(0,1).
\end{align*}
Moreover, from \eqn{pi0 exact}, the normalization constant can be expressed as
\begin{align*}
\pi(0,1) = 
\begin{cases}
\left(\dfrac{1}{1-\rho_{1}} - \dfrac{\rho_{1}^{\ell_{u} + 1}(\ell_{u} - \ell_{d} +2)}{(1 - \rho_{1}^{\ell_{u} - \ell_{d} + 2})} + \dfrac{\rho_{12}(\rho_{1}^{\ell_{u}} - \rho_{1}^{\ell_{u} + 1})(\ell_{u}  - \ell_{d} + 2) }{(1-\rho_{1}^{\ell_{u} - \ell_{d}  + 2})(1 - \rho_{2})} \right)^{-1}, & \rho_{1} \neq 1,\\
\left(\dfrac{\ell_{u} + \ell_{d} + 1}{2} + \dfrac{\rho_{12}}{1 - \rho_{2}} \right)^{-1}, & \rho_{1} = 1.
\end{cases}
\end{align*}
\end{remark}

The proof of \thr{sta exact} is provided in \app{pi}. 
Let $(L,B)$ be a random vector subject to the stationary distribution $\pi$. We also denote the moment generating function (MGF) of $L$ by $\psi$, defined as
\begin{align*}
\psi(\theta) = \mathbb{E}\left(e^{\theta L} \right), \qquad \theta \in \mathbb{R}.
\end{align*}
We note that $\psi(\theta) < \infty$ for all $\theta \le 0$. 
Furthermore, for $i,j = 1,2$, let
\begin{align*}
\psi_{i,j}(\theta) = \mathbb{E} \left(e^{\theta L}1((L,B) \in S_{i,j}) \right) = \sum_{(\ell,k) \in S_{i,j}} e^{\theta \ell} \pi(\ell,k), \qquad \theta \in \mathbb{R}.
\end{align*}
Then, $\psi_{1,1}(\theta)$, $\psi_{2,1}(\theta)$, and $\psi_{1,2}(\theta)$ are finite for all $\theta \in \mathbb{R}$. In contrast, $\psi_{2,2}(\theta)$ is finite only if $\theta < \log \rho_{2}^{-1}$.
From \rem{closed form}, for $\rho_{1} \neq 1$, we have
\begin{align}
\label{eqn:psi 11}
&\psi_{1,1}(\theta) = \frac{1 - (\rho_{1}e^{\theta})^{\ell_{d}}}{1 - \rho_{1}e^{\theta}} \pi(0,1),\\
\label{eqn:psi 21}
&\psi_{2,1}(\theta) = \frac{\frac{(\rho_{1}e^{\theta})^{\ell_{d}} - (\rho_{1} e^{\theta})^{\ell_{u} + 1}}{1 - \rho_{1}e^{\theta}} - \rho_{1}^{\ell_{u} + 1}\frac{e^{\theta \ell_{d}} - e^{\theta(\ell_{u} + 1)}}{1 - e^{\theta}}}{1 - \rho_{1}^{\ell_{u}-\ell_{d} + 2}}\pi(0,1), \qquad \theta \neq 0,\\
\label{eqn:psi 12}
&\psi_{1,2}(\theta) = \dfrac{\rho_{12}(\rho_{1}^{\ell_{u}} - \rho_{1}^{\ell_{u}+1})}{(1 - \rho_{1}^{\ell_{u} - \ell_{d} + 2})(1- \rho_{2})}  \left(\frac{e^{\theta \ell_{d}} - e^{\theta (\ell_{u} + 1)}}{1 - e^{\theta}} - e^{\theta(\ell_{d}-1)} \frac{\rho_{2}e^{\theta} - (\rho_{2} e^{\theta})^{\ell_{u} - \ell_{d} + 2}}{1 - \rho_{2}e^{\theta}} \right)\pi(0,1),  \nonumber\\
& \qquad \qquad \qquad \qquad \quad \qquad \qquad \qquad \qquad \qquad \qquad \qquad \qquad \qquad \qquad \theta \neq 0, \log \rho_{2}^{-1},\\
\label{eqn:psi 22}
&\psi_{2,2}(\theta) = \dfrac{\rho_{12}(\rho_{1}^{\ell_{u}} - \rho_{1}^{\ell_{u}+1}) (1-\rho_{2}^{\ell_{u} - \ell_{d} + 2})}{(1 - \rho_{1}^{\ell_{u} - \ell_{d} + 2})(1- \rho_{2})}  \frac{e^{\theta (\ell_{u} + 1)}}{1 - \rho_{2}e^{\theta} }\pi(0,1),\qquad \theta < \log \rho_{2}^{-1}.
\end{align}
For the case $\rho_1 = 1$, the expressions are as follows:
\begin{align}
\label{eqn:psi 11 1}
&\psi_{1,1}(\theta) = \frac{1 - e^{\theta \ell_{d}}}{1 - e^{\theta}} \pi(0,1), \qquad  \theta \neq 0,\\
\label{eqn:psi 21 1}
&\psi_{2,1}(\theta) = \frac{(\ell_{u} - \ell_{d} + 1)e^{\theta \ell_{d}} -\frac{e^{\theta(\ell_{d} + 1)} - e^{\theta(\ell_{u} + 2)}}{1 - e^{\theta}}}{(\ell_{u} - \ell_{d} + 2)(1 -e^{\theta})}\pi(0,1), \qquad  \theta \neq 0,\\
\label{eqn:psi 12 1}
&\psi_{1,2}(\theta) = \dfrac{\rho_{12}}{(\ell_{u}-\ell_{d} + 2)(1-\rho_{2})}  \left(\frac{e^{\theta \ell_{d}} - e^{\theta (\ell_{u} + 1)}}{1 - e^{\theta}} - e^{\theta(\ell_{d}-1)} \frac{\rho_{2}e^{\theta} - (\rho_{2} e^{\theta})^{\ell_{u} - \ell_{d} + 2}}{1 - \rho_{2}e^{\theta}} \right)\pi(0,1), \nonumber \\
& \qquad \qquad \qquad \qquad \quad \qquad \qquad \qquad \qquad \qquad \qquad \qquad \qquad \qquad \qquad \theta \neq 0, \log \rho_{2}^{-1},\\
\label{eqn:psi 22 1}
&\psi_{2,2}(\theta) = \dfrac{\rho_{12}(1-\rho_{2}^{\ell_{u} - \ell_{d} + 2})}{(\ell_{u}-\ell_{d} + 2)(1-\rho_{2})}  \frac{e^{\theta (\ell_{u} + 1)}}{1 - \rho_{2}e^{\theta} }\pi(0,1),\qquad \theta < \log \rho_{2}^{-1}.
\end{align}

\section{Main result}
\setnewcounter
\label{sec:main}
In this section, we derive a diffusion limit of the stationary distribution $\pi$.
Following the standard approach in diffusion analysis, we consider a sequence of queueing systems, indexed by $n \in \mathbb{N}$. We append the superscript $(n)$ to any notation associated with the $n$-th system, such as $\ell_{u}^{(n)}$, $\ell_{d}^{(n)}$, and $(L^{(n)},B^{(n)})$. 
Our goal is to obtain the limiting distribution of the scaled queue length $\dfrac{1}{\sqrt{n}}L^{(n)}$ as $n \to \infty$, which we refer to as the diffusion limit of the stationary distribution. To this end, we assume the following conditions. For functions $f$ and $g$, we write $g(x) = o(f(x))$ as $x \to \infty$ if 
\begin{align*}
\lim_{x \to \infty} \frac{g(x)}{f(x)} = 0.
\end{align*} 
\begin{enumerate}[(a)]
\item For all $n \in \mathbb{N}$, the $n$-th system is stable: $\rho_{2}^{(n)} < 1$.
\label{enu:stability}
\item For $i = 1,2$, there are constants $\widetilde{\lambda}_{i}, \widetilde{\mu}_{i} \in \mathbb{R}_{>0}$ such that $\lambda_{i}^{(n)} \to \widetilde{\lambda}_{i}$ and $\mu_{i}^{(n)} \to \widetilde{\mu}_{i}$ as $n \to \infty$.
\label{enu:basic}
\item There are positive constants $\widetilde{\ell}_{d},\widetilde{\ell}_{u} \in \mathbb{R}_{>0}$ such that $\widetilde{\ell}_{d} < \widetilde{\ell}_{u}$ and 
\begin{align*}
\ell_{d}^{(n)} = \sqrt{n} \widetilde{\ell}_{d} + o(\sqrt{n}), \quad \ell_{u}^{(n)} = \sqrt{n}\widetilde{\ell}_{u} + o(\sqrt{n}),
\end{align*}
 as $n \to \infty$. 
\label{enu:ell con}
\item There are constants $b_{1} \in \mathbb{R}$ and $b_{2} \in \mathbb{R}_{<0}$ such that
\begin{align}
\label{eqn:scale rho}
\rho_{i}^{(n)} = 1 + \dfrac{b_{i}}{\sqrt{n}} + o\left(\dfrac{1}{\sqrt{n}} \right), \qquad i = 1,2,
\end{align}
as $n \to \infty$.
\label{enu:rho}
\end{enumerate}

A few remarks on these assumptions are in order.
The stability of the $n$-th system requires Assumption \enu{stability}.
Assumption \enu{basic} implies that $\rho_{12}^{(n)} = \widetilde{\rho}_{12} + o(1)$  where $\widetilde{\rho}_{12} = \dfrac{\widetilde{\lambda}_{1}}{\widetilde{\mu}_{2}}$. 
The conditions $\ell_{d}^{(n)} < \ell_{u}^{(n)}$ for all $n$ and the convergence in \enu{ell con} together imply $\widetilde{\ell}_{d} \le \widetilde{\ell}_{u}$. To simplify our discussion , we assume $\widetilde{\ell}_{d} < \widetilde{\ell}_{u}$. From Assumption \enu{rho}, 
\begin{align}
\label{eqn:lambda mu lim}
\lambda_{i}^{(n)} - \mu_{i}^{(n)} = \frac{b_{i} }{\sqrt{n}}\mu_{i}^{(n)} + o\left(\frac{1}{\sqrt{n}}\right), \qquad i = 1,2, \quad n \to \infty.
\end{align}
From \eqn{lambda mu lim} and Assumption \enu{basic}, we must have $\widetilde{\lambda}_{i} = \widetilde{\mu}_{i}$ for $i=1,2$.
Assumptions \enu{ell con} and \enu{rho} are equivalent to
\begin{align}
\label{eqn:lim elli}
&\lim_{n \to \infty} \frac{1}{\sqrt{n}} \ell_{i}^{(n)} = \widetilde{\ell}_{i}, &&i =d,u, \\
&\lim_{n \to \infty} \sqrt{n} (1 - \rho_{j}^{(n)}) = -b_{j}, &&j =1,2. \nonumber
\end{align}
Under these conditions, we can now state the main result of this paper. 

Let $\nu^{(n)}$ denote the probability measure of the scaled queue length $\dfrac{1}{\sqrt{n}} L^{(n)}$, that is,
\begin{align*}
\nu^{(n)}(E) = \mathbb{P}\left(\frac{1}{\sqrt{n}} L^{(n)} \in E \right), \qquad n \in \mathbb{N},\ E \in {\sr B}(\mathbb{R}),
\end{align*}
where ${\sr B}(\mathbb{R})$ is the Borel $\sigma$-field on $\mathbb{R}$.
We are now in a position to state our main result.

\begin{theorem}
\label{thr:main}
Suppose that Assumptions \enu{stability}--\enu{rho} hold. Then, as $n \to \infty$, the sequence $\{\nu^{(n)}\}$ converges weakly to a probability measure $\nu$ with a probability density function $f = f_{1,1} + f_{2,1} + f_{1,2} + f_{2,2}$, that $f_{1,1}, f_{2,1}, f_{1,2}, f_{2,2}$ are defined as follows:
\begin{align}
\label{eqn:con den 1,l}
&f_{1,1}(x) =  C_{0} \left(b_{1} (e^{b_{1}(\widetilde{\ell}_{u} - \widetilde{\ell}_{d})}-1 )1(b_{1} \neq 0) + 1(b_{1} = 0)  \right) e^{b_{1}x}1(x \in [0,\widetilde{\ell}_{d})),\\
\label{eqn:con den 1,u}
&f_{2,1}(x) =  C_{0} \left( b_{1}(e^{b_{1} \widetilde{\ell}_{u}} -e^{b_{1} x})1(b_{1} \neq 0)  +  \frac{\widetilde{\ell}_{u} - x}{\widetilde{\ell}_{u} - \widetilde{\ell}_{d}} 1(b_{1} = 0)\right)1(x \in [\widetilde{\ell}_{d},\widetilde{\ell}_{u}]),\\
\label{eqn:con den 2,l}
&f_{1,2}(x) =  \frac{C_{0}\widetilde{\rho}_{12}}{b_{2}} \left(b_{1}^{2} e^{b_{1} \widetilde{\ell}_{u}}1(b_{1} \neq 0) + \frac{1(b_{1} = 0)}{\widetilde{\ell}_{u} - \widetilde{\ell}_{d}} \right)(e^{b_{2} (x - \widetilde{\ell}_{d})} - 1) 1(x \in [\widetilde{\ell}_{d},\widetilde{\ell}_{u}]),\\
\label{eqn:con den 2,u}
&f_{2,2}(x) = \frac{C_{0}\widetilde{\rho}_{12}(e^{b_{2}(\widetilde{\ell}_{u} - \widetilde{\ell}_{d})} - 1)}{b_{2}} \left(b_{1}^{2}e^{b_{1} \widetilde{\ell}_{u}}1(b_{1} \neq 0) +  \frac{1(b_{1} = 0)}{\widetilde{\ell}_{u} - \widetilde{\ell}_{d}} \right)e^{b_{2}(x -\widetilde{\ell}_{u})}  1(x \in (\widetilde{\ell}_{u},\infty)),
\end{align}
where $C_{0}$ is the normalization constant given by
\begin{align}
\label{eqn:constant C0}
C_{0} = 
\begin{cases}
\left(1 - e^{b_{1}(\widetilde{\ell}_{u} - \widetilde{\ell}_{d})} + b_{1}(\widetilde{\ell}_{u} - \widetilde{\ell}_{d})e^{b_{1}\widetilde{\ell}_{u}}- \dfrac{b_{1}^{2}}{b_{2}}(\widetilde{\ell}_{u} - \widetilde{\ell}_{d})\widetilde{\rho}_{12} e^{b_{1} \widetilde{\ell}_{u}}\right)^{-1}, & b_{1} \neq 0, \\
\left(\dfrac{\widetilde{\ell}_{u} + \widetilde{\ell}_{d}}{2} - \dfrac{\widetilde{\rho}_{12}}{b_{2}} \right)^{-1}, & b_{1} = 0.
\end{cases}
\end{align}
\end{theorem}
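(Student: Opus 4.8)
The plan is to obtain the weak convergence by showing convergence of the moment generating functions (Laplace transforms) of $\nu^{(n)}$ on $(-\infty,0]$, and more generally pointwise convergence of $\psi^{(n)}_{i,j}\big(\tfrac{\theta}{\sqrt n}\big)$ for each of the four pieces, to the Laplace transform of the corresponding $f_{i,j}$. Since $\nu^{(n)}$ is supported on $\mathbb R_+$, convergence of Laplace transforms on a neighbourhood of $0$ in $(-\infty,0]$ (together with tightness, which is immediate from the explicit exponential tail of $\psi^{(n)}_{2,2}$) yields weak convergence by the continuity theorem; alternatively one shows convergence of the distribution functions directly. I would in fact work piece-by-piece: prove that for each fixed $x$, the partial sums $\sum_{(\ell,k)\in S^{(n)}_{i,j},\ \ell\le \sqrt n x}\pi^{(n)}(\ell,k)$ converge to $\int_0^x f_{i,j}$, using the closed forms from \thr{sta exact} and \rem{closed form}.

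The key computational steps, in order: (1) Establish the scaling of the normalization constant, i.e. show $\sqrt n\,\pi^{(n)}(0,1)\to C_0$ when $b_1\ne 0$ and $\pi^{(n)}(0,1)\to C_0$ when $b_1=0$; this follows by inserting \eqn{scale rho} and Assumption \enu{ell con} into the two-case formula for $\pi(0,1)$ in \rem{closed form} and doing a Taylor expansion, noting $\rho_1^{(n)\,\ell_u^{(n)}}=\big(1+\tfrac{b_1}{\sqrt n}\big)^{\sqrt n \widetilde\ell_u+o(\sqrt n)}\to e^{b_1\widetilde\ell_u}$, and similarly $\rho_1^{(n)\,\ell_u^{(n)}-\ell_d^{(n)}+2}\to e^{b_1(\widetilde\ell_u-\widetilde\ell_d)}$, with the telescoping differences producing the factors $b_1$, $b_1^2$. (2) For region $S_{1,1}$: for $x\in[0,\widetilde\ell_d)$ and $\ell=\lfloor\sqrt n x\rfloor$, $\pi^{(n)}(\ell,1)=\rho_1^{(n)\,\ell}\pi^{(n)}(0,1)\approx \tfrac{C_0}{\sqrt n}e^{b_1 x}\cdot(\text{prefactor})$, and summing over $\ell\le\sqrt n x$ is a Riemann sum converging to $\int_0^x f_{1,1}$; the prefactor $b_1(e^{b_1(\widetilde\ell_u-\widetilde\ell_d)}-1)$ appears because the formula in \rem{closed form} for $S_{2,1}$ (which governs the constant via \eqn{pi0 exact}) contributes it — more directly, one rewrites \eqn{sta exact1} so each region's density carries the common factor $\phi(\ell_u)-\phi(\ell-1)$ evaluated suitably. (3)–(4) For $S_{2,1}$, $S_{1,2}$, $S_{2,2}$ repeat the same Riemann-sum/Taylor argument on the explicit expressions; the $S_{1,2}$ term needs the double-limit $\tfrac{1-\rho_2^{(n)\,\ell-(\ell_d-1)}}{1-\rho_2^{(n)}}\cdot(1-\rho_1^{(n)})\cdot(\cdots)$, where one factor of order $1/\sqrt n$ from $(1-\rho_1^{(n)})$ combines with $\ell_u^{(n)}-\ell_d^{(n)}+2\sim\sqrt n(\widetilde\ell_u-\widetilde\ell_d)$ and $\tfrac{1-\rho_2^{(n)\,m}}{1-\rho_2^{(n)}}\to\tfrac{e^{b_2(x-\widetilde\ell_d)}-1}{b_2}$ (using $b_2<0$). (5) Handle the $b_1=0$ case separately throughout, using the $\rho_1=1$ branch of \eqn{phi k} and of the formulas in \rem{closed form}, where $\phi(i)=i+1$ turns geometric sums into arithmetic ones and produces the linear profile $\tfrac{\widetilde\ell_u-x}{\widetilde\ell_u-\widetilde\ell_d}$ in $f_{2,1}$. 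Finally, sum the four limits and check they integrate to $1$, consistent with the stated $C_0$.

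I expect the main obstacle to be bookkeeping rather than conceptual: carefully tracking which terms in the exact formulas are $O(1)$, $O(1/\sqrt n)$, or $O(n)$ under the simultaneous scalings $\ell^{(n)}_{d,u}=\Theta(\sqrt n)$ and $\rho^{(n)}_i=1+\Theta(1/\sqrt n)$, so that products like $\rho_1^{(n)\,\ell_u^{(n)}}$ stay $\Theta(1)$ while differences like $\rho_1^{(n)\,\ell_d^{(n)}}-\rho_1^{(n)\,\ell_u^{(n)}+1}$ are handled without spurious cancellation, and getting the exact constant $b_1$ versus $b_1^2$ prefactors right in $f_{2,1}$ versus $f_{1,2}, f_{2,2}$. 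A secondary technical point is justifying the passage from pointwise convergence of the (sub-probability) distribution functions of each piece to genuine weak convergence of $\nu^{(n)}$: this follows since the limits $F_{i,j}(x)=\int_0^x f_{i,j}$ sum to a continuous distribution function $F$ with $F(\infty)=1$, so pointwise convergence $F^{(n)}\to F$ at every continuity point (here, every point) is exactly weak convergence; tightness is not an issue because of the explicit geometric tail $\rho_2^{(n)\,\ell-\ell_u^{(n)}-1}$ in the $S_{2,2}$ formula, whose exponent scales like $e^{b_2(x-\widetilde\ell_u)}$. Uniformity of the Riemann-sum approximation over $x$ in compact sets (needed to conclude convergence of $F^{(n)}$, not just of individual point masses) is handled by monotonicity of the partial sums in each region together with continuity of the limit.
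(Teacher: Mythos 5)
Your proposal takes the same overall structure as the paper: decompose $\nu^{(n)}$ into the four sub-probability measures $\nu^{(n)}_{i,j}$, establish the $O(1/\sqrt n)$ scaling of the normalization constant $\pi^{(n)}(0,1)$ by inserting Assumptions \enu{ell con} and \enu{rho} into the closed form and expanding (this is precisely \lem{expand lem} and \lem{n pin 0}), and then pass to the limit piece by piece. The main point of departure is that you ultimately prefer to take the limit directly on the (sub-)distribution functions via Riemann sums, whereas the paper works with the scaled MGFs $\psi^{(n)}_{i,j}(\theta/\sqrt n)$ and invokes the continuity theorem. Both routes are viable; the MGF route collapses the Riemann-sum bookkeeping into a single elementary expansion per piece (as in \eqn{varphi n 11}), while your CDF route avoids transform theory but then needs the uniformity-in-$x$ argument you flag at the end.

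Step (1), however, misstates the key asymptotic. From \lem{n pin 0} and \eqn{pi n 0 b1 neq 0}--\eqn{pi n 0 b1 = 0}, the paper shows
\begin{align*}
\sqrt n\,\pi^{(n)}(0,1)\longrightarrow C_0\,b_1\bigl(e^{b_1(\widetilde\ell_u-\widetilde\ell_d)}-1\bigr)\ \ (b_1\neq 0),\qquad
\sqrt n\,\pi^{(n)}(0,1)\longrightarrow C_0\ \ (b_1= 0).
\end{align*}
You wrote ``$\sqrt n\,\pi^{(n)}(0,1)\to C_0$ when $b_1\ne 0$'' (missing the factor $b_1(e^{b_1(\widetilde\ell_u-\widetilde\ell_d)}-1)$, which you do seem to anticipate in step (2), so this reads as a slip), and ``$\pi^{(n)}(0,1)\to C_0$ when $b_1=0$'' (dropping the $\sqrt n$ entirely). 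The second claim is genuinely wrong: under heavy traffic $\pi^{(n)}(0,1)=\Theta(1/\sqrt n)\to 0$, and without the $\sqrt n$ normalization your per-region Riemann sums --- each consisting of $\Theta(\sqrt n)$ terms of size $\Theta(\pi^{(n)}(0,1))$ --- would not converge to the stated densities. Once the constant is corrected, the rest of your outline goes through and aligns with the paper's computations.
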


\begin{remark}
\label{rem:exp uni}
For $i,j = 1,2$, the density function $f_{i,j}$ reveals several interesting properties. For $b_1 \neq 0$, $f_{1,1}$ and $f_{2,2}$ are exponential in form, while for $b_1=0$, $f_{1,1}$ corresponds to a uniform density. 
In this respect, the distribution $\nu$ is similar to that found in \cite{KobaMiyaSaku2025}.
However, the components $f_{2,1}$ and $f_{1,2}$ are neither exponential nor uniform. 
These novel forms are a direct consequence of the history-dependent structure of our model.
For the level-dependent queue in \cite{KobaMiyaSaku2025}, which lacks the overlapping region $\{\ell_d,\ell_d+1, \dots, \ell_u\}$ between two levels, the corresponding density components vanish.
\end{remark}

We prove \thr{main} in the next section.

\section{Proof of \thr{main}}
\setnewcounter
\label{sec:proof}
For each $n \in \mathbb{N}$ and $E \in {\sr B}(\mathbb{R})$, let $\nu_{1,1}^{(n)},\nu_{2,1}^{(n)},\nu_{1,2}^{(n)}$ and  $\nu_{2,2}^{(n)}$ denote the measures corresponding to the four state space partitions, defined by
\begin{align*}
\nu_{i,j}^{(n)}(E) = \mathbb{P}\left(\frac{1}{\sqrt{n}}L^{(n)} \in E,(L^{(n)},B^{(n)}) \in S_{i,j}^{(n)} \right), \qquad i,j = 1,2,
\end{align*}
 where we recall that 
\begin{align*}
&S_{1,1}^{(n)} = \{0,1,\dots,\ell_{d}^{(n)}-1\} \times \{1\}, && S_{2,1}^{(n)} = {\{\ell_{d}^{(n)},\ell_{d}^{(n)}+1,\dots,\ell_{u}^{(n)}\}} \times \{1\},\\
&S_{1,2}^{(n)} = {\{\ell_{d}^{(n)},\ell_{d}^{(n)}+1,\dots,\ell_{u}^{(n)}\}} \times \{2\}, 
&&S_{2,2}^{(n)} = \{\ell_{u}^{(n)}+1,\ell_{u}^{(n)} + 2, \dots \} \times \{2\}.
\end{align*}
For any $n \in \mathbb{N}$ and $E \in {\sr B}(\mathbb{R})$, it is clear that these measures partition the total probability measure $\nu^{(n)}$:
\begin{align}
\label{eqn:partition}
\nu^{(n)}(E) = \sum_{i,j =1,2}\nu_{i,j}^{(n)}(E).
\end{align}
For $i,j = 1,2$, let $\nu_{i,j}$ denote the measure with density function $f_{i,j}$, that is,
\begin{align*}
\nu_{i,j} (E) = \int_{E} f_{i,j}(x) dx, \qquad E \in {\sr B}(\mathbb{R}).
\end{align*}
To prove Theorem \ref{thr:main}, it is sufficient to show that
\begin{align}
\label{eqn:lim nu i,j}
\lim_{n \to \infty} \nu_{i,j}^{(n)}(E) = \nu_{i,j}(E), \qquad i,j = 1,2, E \in {\sr B}(\mathbb{R}).
\end{align}
To verify \eqn{lim nu i,j}, it suffices to prove $\varphi_{i,j}^{(n)} \to \varphi_{i,j}$ as $n \to \infty$ where $\varphi_{i,j}^{(n)}$ and $\varphi_{i,j}$ are defined as
\begin{align*}
&\varphi_{i,j}^{(n)}(\theta) = \int_{-\infty}^{\infty} e^{\theta x} \nu_{i,j}^{(n)}(dx) = \mathbb{E}\left(e^{\frac{\theta}{\sqrt{n}}L^{(n)}} 1 ({(L^{(n)},B^{(n)}) \in S_{i,j}^{(n)}}) \right),\\
&\varphi_{i,j}(\theta) = \int_{-\infty}^{\infty} e^{\theta x} \nu_{i,j} (dx) = \int_{-\infty}^{\infty} e^{\theta x} f_{i,j}(x) dx,
\end{align*}
for $n \in \mathbb{N}, i,j = 1,2$ and $\theta \in \mathbb{R}$.

It follows from \eqn{psi 11}, \eqn{psi 11 1} and  Assumptions \enu{ell con} and \enu{rho} that, for $\theta \neq -b_{1}$,  
\begin{align}
\label{eqn:varphi n 11}
\varphi_{1,1}^{(n)}(\theta) &= \psi_{1,1}^{(n)}\left(\frac{\theta}{\sqrt{n}} \right) \nonumber\\
&= \frac{1 - (\rho_{1}^{(n)}e^{\frac{\theta}{\sqrt{n}}})^{\ell_{d}^{(n)} + 1}}{1 - \rho_{1}^{(n)} e^{\frac{\theta}{\sqrt{n}}}}\pi^{(n)}(0,1) \nonumber\\
&= \frac{e^{(\theta + b_{1})\widetilde{\ell_{d}}}-1  + o(1)}{\theta + b_{1} + o(1)} \sqrt{n} \pi^{(n)}(0,1),
\end{align}
where we use the asymptotic expansions
\begin{align}
\label{eqn:rho1 b1}
\left(\rho_{1}^{(n)}\right)^{\ell_{d}^{(n)} + 1} = e^{b_{1} \widetilde{\ell}_{d}} + o(1), \qquad \rho_{1}^{(n)} e^{\frac{\theta}{\sqrt{n}}} = \left(1 + \frac{b_{1}}{\sqrt{n}} + o\left(\frac{1}{\sqrt{n}} \right) \right)\left(1 + \frac{\theta}{\sqrt{n}} + o\left(\frac{1}{\sqrt{n}} \right) \right).
\end{align}
The expression \eqn{varphi n 11} shows that to prove $\varphi_{i,j}^{(n)}(\theta) \to \varphi_{i,j}(\theta)$ as $n \to \infty$, we need to determine the asymptotic behavior of $\sqrt{n} \pi^{(n)}(0,1)$. 
For this, let $g_{i}(n)$ be a function on $\mathbb{N}$ representing the higher-order error term in the heavy traffic expansion:
\begin{align}
\label{eqn:g1n}
g_{i}(n) = \rho_{i}^{(n)} - 1 - \dfrac{b_{i}}{\sqrt{n}}, \qquad i = 1,2.
\end{align}
From Assumption \enu{rho}, we immediately have $g_{i}(n) = o\left(\dfrac{1}{\sqrt{n}}\right)$ as $n \to \infty$. 
We first obtain the following lemma.

\begin{lemma}
\label{lem:expand lem}
Let $a(n) \in \mathbb{R}$ and $b(n) > 0$ be functions on $\mathbb{N}$ satisfying $a(n) = o\left(\dfrac{1}{b(n)} \right)$ and $b(n) \to \infty$ as $n \to \infty$. Then, we have 
\begin{align}
\label{eqn:expand 1}
&(1 + a(n))^{b(n)} = 1 + a(n) b(n) (1 + o(1)),\\
\label{eqn:expand 2}
&(1 + a(n))^{b(n)} = 1 + a(n) b(n)\left(1  + \frac{a(n)(b(n)-1)}{2}(1 + o(1))\right),
\end{align}
as $n \to \infty$.
\end{lemma}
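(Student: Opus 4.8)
The plan is to reduce both expansions \eqn{expand 1} and \eqn{expand 2} to the Taylor series of $\log(1+x)$ and $e^{x}$ at $x = 0$. The first step is to extract the two consequences of the hypotheses that drive the argument: from $a(n) = o(1/b(n))$ we get $a(n) b(n) \to 0$, and since moreover $b(n) \to \infty$ this forces $a(n) \to 0$. In particular $1 + a(n) > 0$ for all large $n$, so $(1+a(n))^{b(n)}$ is well defined and we may take logarithms; also, if $a(n) = 0$ then both identities hold trivially, so we may assume $a(n) \neq 0$ throughout.

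Next I would expand $\log(1+a(n)) = a(n) - \tfrac12 a(n)^{2} + O(a(n)^{3})$ and multiply by $b(n)$, so that the exponent $t_{n} := b(n)\log(1+a(n))$ satisfies
\[
 t_{n} = a(n) b(n) - \tfrac12 a(n)^{2} b(n) + O\!\big(a(n)^{3} b(n)\big),
\]
where every term tends to $0$ because $a(n)^{k} b(n) = \big(a(n) b(n)\big) a(n)^{k-1} \to 0$ for $k \ge 1$; hence $t_{n} \to 0$. Applying $e^{t_{n}} = 1 + t_{n} + \tfrac12 t_{n}^{2} + O(t_{n}^{3})$, substituting the expansion of $t_{n}$, noting that $t_{n}^{2} = \big(a(n)b(n)\big)^{2} + O\!\big(a(n)^{3} b(n)^{2}\big)$ and $t_{n}^{3} = O\!\big(a(n)^{3} b(n)^{3}\big)$, and using the elementary identity
\[
 a(n) b(n) - \tfrac12 a(n)^{2} b(n) + \tfrac12 \big(a(n) b(n)\big)^{2} = a(n) b(n) + \tfrac12 a(n)^{2} b(n)\big(b(n)-1\big),
\]
I would arrive at
\[
 (1+a(n))^{b(n)} = 1 + a(n) b(n) + \tfrac12 a(n)^{2} b(n)\big(b(n)-1\big) + R_{n},
\]
where $R_{n}$ is a sum of three terms of the respective orders $O(a(n)^{3} b(n))$, $O(a(n)^{3} b(n)^{2})$ and $O(a(n)^{3} b(n)^{3})$.

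The final step is to see that $R_{n}$ is negligible relative to $v_{n} := \tfrac12 a(n)^{2} b(n)(b(n)-1)$: dividing the three error orders by $v_{n}$ yields, up to constant factors, $a(n)/(b(n)-1)$, $a(n) b(n)/(b(n)-1)$ and $a(n) b(n)^{2}/(b(n)-1)$, each of which tends to $0$ by combining $a(n) \to 0$, $b(n) \to \infty$ and $a(n) b(n) \to 0$. Hence $R_{n} = v_{n} \cdot o(1)$, and rewriting $a(n) b(n) + v_{n}(1+o(1)) = a(n) b(n)\big(1 + \tfrac12 a(n)(b(n)-1)(1+o(1))\big)$ gives \eqn{expand 2}. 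For \eqn{expand 1} I would note in addition that $v_{n}/\big(a(n) b(n)\big) = \tfrac12 a(n)(b(n)-1) \to 0$, so that $v_{n} + R_{n} = o\big(a(n) b(n)\big)$ and the three-term expansion collapses to $1 + a(n) b(n)(1+o(1))$. I expect the one delicate point to be precisely this bookkeeping: since $a(n) b(n)$ and $a(n)^{2} b(n)(b(n)-1)$ both vanish, the error terms cannot be dismissed as merely ``small'' — one has to check their orders relative to $v_{n}$ (and to $a(n) b(n)$) explicitly, as above; the rest is a routine Taylor expansion.
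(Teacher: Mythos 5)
Your proof is correct, but it takes a genuinely different route from the paper's. The paper observes first that $(1+a(n))^{b(n)} = 1 + o(1)$, then applies the Binomial theorem (so implicitly treating $b(n)$ as a positive integer, which holds in the application since $b(n)$ is a queue-length threshold): it writes $(1+a(n))^{b(n)} - 1 = \sum_{k\ge 1}\binom{b(n)}{k}a(n)^k$, factors out $a(n)b(n)$, and recognizes the remaining tail as another binomial-type sum that is again $o(1)$ by the same initial observation; iterating once more (factoring out $\frac{a(n)(b(n)-1)}{2}$) gives \eqn{expand 2}. You instead take logarithms, expand $\log(1+a(n))$ to third order, exponentiate, use the algebraic identity $a b - \tfrac12 a^2 b + \tfrac12 (ab)^2 = ab + \tfrac12 a^2 b(b-1)$, and bound the three residual terms of orders $a^3 b$, $a^3 b^2$, $a^3 b^3$ against $v_n = \tfrac12 a^2 b(b-1)$, each ratio tending to $0$ by $a(n)\to 0$, $a(n)b(n)\to 0$, $b(n)\to\infty$. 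The trade-off: the paper's argument is combinatorially clean but only makes sense for integer $b(n)$ and needs the slightly subtle step of reusing the $o(1)$ bound on the modified tail sums; your argument is valid for arbitrary real $b(n)>0$ (once $1+a(n)>0$, which you correctly note holds for large $n$) but requires the explicit order-by-order bookkeeping you carry out. Your handling of the degenerate case $a(n)=0$ (where $v_n=0$ makes the ratios ill-defined) by setting the $o(1)$ on that subsequence to $0$ is the right fix and closes the one potential gap in the division-by-$v_n$ step.
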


\begin{proof}
Since $|a(n)| = o\left(\dfrac{1}{b(n)}\right)$ and $b(n) |\log(1 + a(n))| \le |a(n)|b(n)$, we have $\displaystyle{\lim_{n \to \infty} b(n) \log (1 + a(n)) = 0}$, which is equivalent to $\displaystyle{\lim_{n \to \infty} (1 + a(n))^{b(n)} = 1}$, i.e.,
\begin{align*}
(1 + a(n))^{b(n)} = 1 + o(1),
\end{align*}
as $n \to \infty$. From the Binomial theorem, 
\begin{align*}
(1 + a(n))^{b(n)} - 1 = \sum_{k=1}^{b(n)} \frac{b(n)!}{k! (b(n)-k)!} (a(n))^{k}.
\end{align*}
Hence, we have 
\begin{align}
\label{eqn:sum o(1)}
\sum_{k=1}^{b(n)} \frac{b(n)!}{k! (b(n)-k)!} (a(n))^{k} = o(1).
\end{align}
Factoring out the first term gives
\begin{align*}
\sum_{k=1}^{b(n)} \frac{b(n)!}{k! (b(n)-k)!} (a(n))^{k} &= a(n)b(n)\left(1 + \sum_{k=2}^{b(n)} \frac{(b(n)-1)!}{k! (b(n)-k)!} (a(n))^{k-1} \right)\\
&= a(n)b(n)\left(1 + \sum_{k=1}^{b(n)-1} \frac{(b(n)-1)!}{(k+1)! (b(n)-1-k)!} (a(n))^{k} \right).
\end{align*}
From \eqn{sum o(1)}, the summation on the right-hand side is $o(1)$, which proves \eqn{expand 1}.
Similarly, to obtain the second-order term, we have
\begin{align*}
&\sum_{k=1}^{b(n)} \frac{b(n)!}{k! (b(n)-k)!} (a(n))^{k} \\
& \quad \qquad = a(n)b(n)\left(1  +\frac{a(n) (b(n)-1)}{2}\left(1 + 2 \sum_{k=1}^{b(n)-2} \frac{(b(n)-2)!}{(k+2)! (b(n)-2-k)!} (a(n))^{k} \right) \right).
\end{align*}
From \eqn{sum o(1)} again, the summation term on the right-hand side is $o(1)$, which implies \eqn{expand 2}.
\end{proof}

From \lem{expand lem}, we have the following result for the asymptotic behavior of the normalization constant.
\begin{lemma}
\label{lem:n pin 0}
Suppose Assumptions \enu{stability}--\enu{rho} hold. If $b_{1} \neq 0$, then 
\begin{align}
\label{eqn:b1 neq 0}
\sqrt{n} \pi^{(n)}(0,1) = \left(\dfrac{1}{-b_{1} + o(1)} - \dfrac{e^{b_{1}\widetilde{\ell}_{u}}(\widetilde{\ell}_{u} - \widetilde{\ell}_{d}) + o(1)}{1 - e^{b_{1}(\widetilde{\ell}_{u} - \widetilde{\ell}_{d})} + o(1) } + \dfrac{b_{1}(\widetilde{\ell}_{u}  - \widetilde{\ell}_{d})\widetilde{\rho}_{12}e^{b_{1}\widetilde{\ell}_{u}} + o(1) }{b_{2}(1 - e^{b_{1}(\widetilde{\ell}_{u} - \widetilde{\ell}_{d})})+o(1)} \right)^{-1},
\end{align}
and if $b_{1} = 0$, then 
\begin{align}
\label{eqn:b1 = 0}
\sqrt{n} \pi^{(n)}(0,1) = \left(\frac{\widetilde{\ell}_{u} + \widetilde{\ell}_{d} + o(1)}{2(1 + o(1))} - \dfrac{\widetilde{\rho}_{12} + o(1)}{b_{2} + o(1)} \right)^{-1},
\end{align}
as $n \to \infty$. 
\end{lemma}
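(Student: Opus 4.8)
The plan is to start from the closed-form normalization constant. By \rem{closed form} (equivalently \eqn{pi0 exact}), for the $n$-th system with $\rho_{1}^{(n)} \neq 1$ one has $\pi^{(n)}(0,1) = (D^{(n)})^{-1}$ where, writing $m^{(n)} := \ell_{u}^{(n)} - \ell_{d}^{(n)} + 2$,
\begin{align*}
D^{(n)} = \frac{1}{1 - \rho_{1}^{(n)}} - \frac{(\rho_{1}^{(n)})^{\ell_{u}^{(n)}+1}\, m^{(n)}}{1 - (\rho_{1}^{(n)})^{m^{(n)}}} + \frac{\rho_{12}^{(n)}(\rho_{1}^{(n)})^{\ell_{u}^{(n)}}(1 - \rho_{1}^{(n)})\, m^{(n)}}{(1 - (\rho_{1}^{(n)})^{m^{(n)}})(1 - \rho_{2}^{(n)})},
\end{align*}
the case $\rho_{1}^{(n)} = 1$ being covered by the companion formula of \rem{closed form}; this is harmless, since it occurs only finitely often when $b_{1} \neq 0$, and when $b_{1} = 0$ it yields exactly the same asymptotics. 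Because $\sqrt{n}\,\pi^{(n)}(0,1) = (n^{-1/2}D^{(n)})^{-1}$, it suffices to expand $n^{-1/2}D^{(n)} = T_{1}^{(n)} + T_{2}^{(n)} + T_{3}^{(n)}$, where $T_{j}^{(n)}$ is the $j$-th summand of $D^{(n)}$ divided by $\sqrt{n}$, and then invert. Throughout I will use that $\log \rho_{1}^{(n)} = b_{1}/\sqrt{n} + o(1/\sqrt{n})$ (from \eqn{scale rho}), hence $(\rho_{1}^{(n)})^{k^{(n)}} = e^{b_{1}c} + o(1)$ whenever $k^{(n)} = \sqrt{n}\,c + o(\sqrt{n})$, together with $\sqrt{n}(1 - \rho_{j}^{(n)}) = -b_{j} + o(1)$, $m^{(n)}/\sqrt{n} = \widetilde{\ell}_{u} - \widetilde{\ell}_{d} + o(1)$ and $\ell_{i}^{(n)}/\sqrt{n} = \widetilde{\ell}_{i} + o(1)$ from \eqn{lim elli}, and $\rho_{12}^{(n)} = \widetilde{\rho}_{12}+o(1)$.

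If $b_{1} \neq 0$, each $T_{j}^{(n)}$ converges to a finite limit with nonvanishing denominator, so I would simply substitute the expansions above: $T_{1}^{(n)} = (\sqrt{n}(1-\rho_{1}^{(n)}))^{-1} = (-b_{1}+o(1))^{-1}$; using $(\rho_{1}^{(n)})^{\ell_{u}^{(n)}+1} = e^{b_{1}\widetilde{\ell}_{u}}+o(1)$ and $(\rho_{1}^{(n)})^{m^{(n)}} = e^{b_{1}(\widetilde{\ell}_{u}-\widetilde{\ell}_{d})}+o(1)$ gives $T_{2}^{(n)} = -(e^{b_{1}\widetilde{\ell}_{u}}(\widetilde{\ell}_{u}-\widetilde{\ell}_{d})+o(1))/(1-e^{b_{1}(\widetilde{\ell}_{u}-\widetilde{\ell}_{d})}+o(1))$; and, using $(1-\rho_{1}^{(n)})m^{(n)} = -b_{1}(\widetilde{\ell}_{u}-\widetilde{\ell}_{d})+o(1)$ and $\sqrt{n}(1-\rho_{2}^{(n)}) = -b_{2}+o(1)$, $T_{3}^{(n)} = (b_{1}(\widetilde{\ell}_{u}-\widetilde{\ell}_{d})\widetilde{\rho}_{12}e^{b_{1}\widetilde{\ell}_{u}}+o(1))/(b_{2}(1-e^{b_{1}(\widetilde{\ell}_{u}-\widetilde{\ell}_{d})})+o(1))$. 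Summing and inverting yields \eqn{b1 neq 0}.

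If $b_{1} = 0$ the argument is more delicate, and this is where the main work lies: since then $1-\rho_{1}^{(n)} = -g_{1}(n) = o(1/\sqrt{n})$ (see \eqn{g1n}), both $T_{1}^{(n)}$ and $T_{2}^{(n)}$ diverge like $1/(\sqrt{n}\,g_{1}(n))$, and only their sum is bounded, so they must be combined before passing to the limit. I would use the geometric-sum identity $\tfrac{1}{1-\rho_{1}^{(n)}} = \tfrac{\sigma^{(n)}}{1-(\rho_{1}^{(n)})^{m^{(n)}}}$ with $\sigma^{(n)} := \sum_{j=0}^{m^{(n)}-1}(\rho_{1}^{(n)})^{j}$, so that $T_{1}^{(n)}+T_{2}^{(n)} = n^{-1/2}\bigl(\sigma^{(n)} - (\rho_{1}^{(n)})^{\ell_{u}^{(n)}+1}m^{(n)}\bigr)/\bigl(1-(\rho_{1}^{(n)})^{m^{(n)}}\bigr)$. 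The second-order expansion \eqn{expand 2} of \lem{expand lem} (applicable since $g_{1}(n)=o(1/\sqrt{n})$) gives $\sigma^{(n)} = m^{(n)} + g_{1}(n)\tfrac{m^{(n)}(m^{(n)}-1)}{2}(1+o(1))$, and \eqn{expand 1} gives $(\rho_{1}^{(n)})^{\ell_{u}^{(n)}+1}m^{(n)} = m^{(n)} + g_{1}(n)m^{(n)}(\ell_{u}^{(n)}+1)(1+o(1))$; the leading $m^{(n)}$'s cancel, and the algebraic identity $\tfrac{m(m-1)}{2} - m(\ell_{u}+1) = -\tfrac{m(\ell_{u}+\ell_{d}+1)}{2}$ (valid for $m = \ell_{u}-\ell_{d}+2$) leaves $\sigma^{(n)}-(\rho_{1}^{(n)})^{\ell_{u}^{(n)}+1}m^{(n)} = -g_{1}(n)m^{(n)}\tfrac{\ell_{u}^{(n)}+\ell_{d}^{(n)}+1}{2}(1+o(1))$, while $1-(\rho_{1}^{(n)})^{m^{(n)}} = (1-\rho_{1}^{(n)})\sigma^{(n)} = -g_{1}(n)m^{(n)}(1+o(1))$; dividing and using $\ell_{u}^{(n)}+\ell_{d}^{(n)}+1 = \sqrt{n}(\widetilde{\ell}_{u}+\widetilde{\ell}_{d})+o(\sqrt{n})$ gives $T_{1}^{(n)}+T_{2}^{(n)} = (\widetilde{\ell}_{u}+\widetilde{\ell}_{d}+o(1))/(2(1+o(1)))$. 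For $T_{3}^{(n)}$ the first-order expansion \eqn{expand 1} suffices: $\sigma^{(n)} = m^{(n)}(1+o(1))$ and $(\rho_{1}^{(n)})^{\ell_{u}^{(n)}} = 1+o(1)$, and cancelling $1-\rho_{1}^{(n)}$ against $1-(\rho_{1}^{(n)})^{m^{(n)}} = (1-\rho_{1}^{(n)})\sigma^{(n)}$ yields $T_{3}^{(n)} = \rho_{12}^{(n)}(\rho_{1}^{(n)})^{\ell_{u}^{(n)}}m^{(n)}/(\sqrt{n}\,\sigma^{(n)}(1-\rho_{2}^{(n)})) = -(\widetilde{\rho}_{12}+o(1))/(b_{2}+o(1))$. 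Adding these and inverting gives \eqn{b1 = 0}.

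The crux is thus the $b_{1}=0$ regime: since $T_{1}^{(n)}$ and $T_{2}^{(n)}$ individually blow up, one cannot substitute limits term by term but must first put them over the common denominator $1-(\rho_{1}^{(n)})^{m^{(n)}}$ and retain the \emph{second}-order term of the geometric sum $\sigma^{(n)}$ — precisely the content of \eqn{expand 2} in \lem{expand lem}. Tracking the $o(1)$ errors correctly through this cancellation, and verifying the identity $\tfrac{m(m-1)}{2}-m(\ell_{u}+1) = -\tfrac{m(\ell_{u}+\ell_{d}+1)}{2}$, is where the real effort goes; the $b_{1}\neq 0$ case is by comparison a routine substitution of the asymptotics recorded in the first paragraph.
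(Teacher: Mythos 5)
Your proof is correct and follows essentially the same route as the paper: start from the closed-form normalization constant in \rem{closed form}, handle $b_1\neq 0$ by term-by-term substitution of the asymptotics, and handle $b_1=0$ by invoking the second-order expansion \eqn{expand 2} of \lem{expand lem} to resolve the cancellation of the two terms that individually diverge like $1/(\sqrt{n}\,g_1(n))$, then checking that the degenerate subcase $\rho_1^{(n)}=1$ gives the same limit. The one genuine (if modest) difference is organizational: where the paper substitutes the expansions \eqn{d-2 eq2}--\eqn{d-2 eq1} separately into all three summands of \eqn{rho neq 1} and lets the divergent pieces cancel inside a long display, you first rewrite $T_1^{(n)}$ via the geometric sum $\sigma^{(n)}=\sum_{j=0}^{m^{(n)}-1}(\rho_1^{(n)})^j$ so that $T_1^{(n)}+T_2^{(n)}$ sit over the common denominator $1-(\rho_1^{(n)})^{m^{(n)}}$ before applying \lem{expand lem}; this makes the leading-order cancellation $m^{(n)}-m^{(n)}=0$ and the surviving algebraic identity $\tfrac{m(m-1)}{2}-m(\ell_u+1)=-\tfrac{m(\ell_u+\ell_d+1)}{2}$ explicit, and it also lets you dispose of $T_3^{(n)}$ by simply cancelling the factor $1-\rho_1^{(n)}$. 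The ingredients (the closed form, \lem{expand lem}, and the case split) are the same; your bookkeeping of where the second-order term is needed is a little cleaner.
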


\begin{proof}
From Assumptions \enu{basic} and \enu{rho}, we have the following convergences:
\begin{align}
\label{eqn:basic eq1}
&\rho_{12}^{(n)} = \widetilde{\rho}_{12} + o(1), &&1 -\rho_{i}^{(n)} = - \frac{b_{i}}{\sqrt{n}} - g_{i}(n), \quad i = 1,2, \\
\label{eqn:d-1 eq}
&(\rho_{1}^{(n)})^{\ell_{u}^{(n)} - \ell_{d}^{(n)} + 2} = e^{b_{1}(\widetilde{\ell}_{u} - \widetilde{\ell}_{d})} + o(1), &&(\rho_{1}^{(n)})^{\ell_{u}^{(n)} } = (\rho_{1}^{(n)})^{\ell_{u}^{(n)} + 1} = e^{b_{1}\widetilde{\ell}_{u}} + o(1),
\end{align}
 as $n \to \infty$. 
 Moreover, from Remark \ref{rem:closed form}, for $\rho_{1}^{(n)} \neq 1$, $\pi^{(n)}(0,1)$ has the following form:
\begin{align}
\label{eqn:rho neq 1}
 \left(\dfrac{1}{1-\rho_{1}^{(n)}} - \dfrac{(\rho_{1}^{(n)})^{\ell_{u}^{(n)} + 1}(\ell_{u}^{(n)} - \ell_{d}^{(n)} +2)}{(1 - (\rho_{1}^{(n)})^{\ell_{u}^{(n)} - \ell_{d}^{(n)} + 2})} + \dfrac{\rho_{12}^{(n)}((\rho_{1}^{(n)})^{\ell_{u}^{(n)}} - (\rho_{1}^{(n)})^{\ell_{u}^{(n)} + 1})(\ell_{u}^{(n)}  - \ell_{d}^{(n)} + 2) }{(1-(\rho_{1}^{(n)})^{\ell_{u}^{(n)} - \ell_{d}^{(n)}  + 2})(1 - \rho_{2}^{(n)})} \right)^{-1}.
\end{align}
If $b_{1} \neq 0$, then $\rho_{1}^{(n)} \neq 1$ for all sufficiently large $n$. Substituting the asymptotic relations in Assumption \enu{ell con} and the fact that $\sqrt{n} g_{1}(n) = o(1)$ into \eqn{rho neq 1} gives
\begin{align*}
\pi^{(n)}(0,1) = \frac{1}{\sqrt{n}}\left(\dfrac{1}{-b_{1} + o(1)} - \dfrac{e^{b_{1}\widetilde{\ell}_{u}}(\widetilde{\ell}_{u} - \widetilde{\ell}_{d}) + o(1)}{1 - e^{b_{1}(\widetilde{\ell}_{u} - \widetilde{\ell}_{d})} + o(1) } + \dfrac{b_{1}(\widetilde{\ell}_{u}  - \widetilde{\ell}_{d})\widetilde{\rho}_{12}e^{b_{1}\widetilde{\ell}_{u}} + o(1)}{b_{2}(1 - e^{b_{1}(\widetilde{\ell}_{u} - \widetilde{\ell}_{d})})+o(1)} \right)^{-1},
\end{align*}
which completes the proof of \eqn{b1 neq 0}. 

If $b_{1} = 0$, then we must consider both cases $\rho_{1}^{(n)} \neq 1$ and $\rho_{1}^{(n)} = 1$, and we prove that they yield the same limit \eqn{b1 = 0}.
We first assume $b_{1} = 0$ and $\rho_{1}^{(n)} \neq 1$ for sufficiently large $n \in \mathbb{N}$.
Note that $\rho_{1}^{(n)} = 1 + g_{1}(n)$ with $g_{1}(n) = o\left(\dfrac{1}{\sqrt{n}} \right)$. Therefore, from \eqn{expand 1} and \eqn{expand 2} in \lem{expand lem}, 
\begin{align}
\label{eqn:d-2 eq2}
&(\rho_{1}^{(n)})^{\ell_{u}^{(n)}} = 1 + \ell_{u}^{(n)}g_{1}(n) \left(1 + o(1) \right),\\
\label{eqn:d-2 eq1}
&1 - (\rho_{1}^{(n)})^{\ell_{u}^{(n)} - \ell_{d}^{(n)} + 2} 
 = - (\ell_{u}^{(n)} - \ell_{d}^{(n)} + 2)g_{1}(n) \left(1  + \frac{\ell_{u}^{(n)} -\ell_{d}^{(n)} + 1}{2}g_{1}(n)(1 + o(1))\right).
\end{align}
Note that the $o(1)$ terms in \eqn{d-2 eq2} and \eqn{d-2 eq1} are not necessarily identical.
From \eqn{rho neq 1}, we obtain, as $n \to \infty$, 
\begin{align*}
\pi^{(n)}(0,1) &= \left(\frac{1}{-g_{1}(n)} + \frac{1 + \ell_{u}^{(n)} g_{1}(n)(1  + o(1))}{g_{1}(n)\left(1  + \frac{\ell_{u}^{(n)} -\ell_{d}^{(n)} + 1}{2}g_{1}(n)(1  + o(1))\right)} \right.\\
& \qquad \qquad \left. - \frac{\sqrt{n}g_{1}(n)(\widetilde{\rho}_{12} + o(1))(1 + \ell_{u}^{(n)}g_{1}(n) \left(1 + o(1) \right))}{g_{1}(n) \left(1  + \frac{\ell_{u}^{(n)} -\ell_{d}^{(n)} + 1}{2}g_{1}(n)(1 + o(1))\right) (b_{2} + o(1))}\right)^{-1}\\
&= \left(\frac{\ell_{u}^{(n)} + \ell_{d}^{(n)} + \sqrt{n} o(1)}{2\left(1  + \frac{\ell_{u}^{(n)} -\ell_{d}^{(n)} + 1}{2}g_{1}(n)(1  + o(1))\right)} 
 - \frac{\sqrt{n}(\widetilde{\rho}_{12} + o(1))(1 + o(1)\left(1 + o(1) \right))}{ \left(1  + o(1)(1 + o(1))\right) (b_{2} + o(1))}\right)^{-1}\\
&= \left(\frac{\sqrt{n}\left(\left(\widetilde{\ell}_{u} + \widetilde{\ell}_{d} + g_{1}(n)\right) + o(1)\right)}{2(1 + o(1))} - \frac{\sqrt{n} (\widetilde{\rho}_{12} + o(1))}{b_{2} + o(1)} \right)^{-1},
\end{align*}
which implies \eqn{b1 = 0}. 

Finally, if $\rho_{1}^{(n)} = 1$ for all sufficiently large $n$,  then from Remark \ref{rem:closed form} again, 
\begin{align*}
\pi^{(n)} (0,1) &= \left(\dfrac{\ell_{u}^{(n)} + \ell_{d}^{(n)} + 1}{2} + \dfrac{\rho_{12}^{(n)}}{1 - \rho_{2}^{(n)}} \right)^{-1}\\
&=\left(\dfrac{\sqrt{n}(\widetilde{\ell}_{u} + \widetilde{\ell}_{d} + o(1))}{2} - \dfrac{\sqrt{n}(\widetilde{\rho}_{12} + o(1))}{b_{2} + o(1)} \right)^{-1}.
\end{align*}
Hence, we have \eqn{b1 = 0} in this case as well.
\end{proof}

\begin{proof*}{Proof of \thr{main}}
From \eqn{b1 neq 0} in \lem{n pin 0}, for $b_{1} \neq 0$, 
\begin{align}
\label{eqn:pi n 0 b1 neq 0}
\lim_{n \to \infty} \sqrt{n} \pi^{(n)}(0,1) &= \left(\dfrac{1}{-b_{1}} - \dfrac{e^{b_{1}\widetilde{\ell}_{u}}(\widetilde{\ell}_{u} - \widetilde{\ell}_{d})}{1 - e^{b_{1}(\widetilde{\ell}_{u} - \widetilde{\ell}_{d})}} + \dfrac{b_{1}(\widetilde{\ell}_{u}  - \widetilde{\ell}_{d})\widetilde{\rho}_{12}e^{b_{1}\widetilde{\ell}_{u}} }{b_{2}(1 - e^{b_{1}(\widetilde{\ell}_{u} - \widetilde{\ell}_{d})})} \right)^{-1} \nonumber\\
&=  C_{0}b_{1}(e^{b_{1}(\widetilde{\ell}_{u} - \widetilde{\ell}_{d})}-1),
\end{align}
and from \eqn{b1 = 0}, for $b_{1} = 0$, 
\begin{align}
\label{eqn:pi n 0 b1 = 0}
\lim_{n \to \infty} \sqrt{n} \pi^{(n)}(0,1) = \left(\dfrac{\widetilde{\ell}_{u} + \widetilde{\ell}_{d}}{2} - \dfrac{\widetilde{\rho}_{12}}{b_{2}} \right)^{-1} = C_{0}. 
\end{align}
Thus, from \eqn{varphi n 11}, 
\begin{align*}
\lim_{n \to \infty} \varphi_{1,1}^{(n)}(\theta) =
C_{0} (b_{1}(e^{b_{1}(\widetilde{\ell}_{u} - \widetilde{\ell}_{d})}-1)1(b_{1} \neq 0) + 1(b_{1} = 0)) \frac{e^{(\theta + b_{1})\widetilde{\ell_{d}}}-1}{\theta + b_{1}}.
\end{align*}
On the other hand, it follows from \eqn{con den 1,l} that 
\begin{align*}
\varphi_{1,1}(\theta) &= \int_{0}^{\widetilde{\ell}_{d}}  e^{\theta x}C_{0} \left(b_{1} (e^{b_{1}(\widetilde{\ell}_{u} - \widetilde{\ell}_{d})}-1 )1(b_{1} \neq 0) + 1(b_{1} = 0)  \right) e^{b_{1}x}  dx\\
&=C_{0} (b_{1}(e^{b_{1}(\widetilde{\ell}_{u} - \widetilde{\ell}_{d})}-1)1(b_{1} \neq 0) + 1(b_{1} = 0)) \frac{e^{(\theta + b_{1})\widetilde{\ell_{d}}}-1}{\theta + b_{1}}.
\end{align*}
Thus, we have shown that $\displaystyle{\lim_{n \to \infty} \varphi_{1,1}^{(n)}(\theta) = \varphi_{1,1}(\theta)}$, which implies that $\displaystyle{\lim_{n \to \infty} \nu_{1,1}^{(n)}(E) = \nu_{1,1}(E)}$ for $E \in \sr{B}(\mathbb{R})$.
The convergence for the other components can be verified as follows:
\begin{align}
\label{eqn:comp 21}
&\lim_{n \to \infty} \varphi_{2,1}^{(n)}(\theta)   = C_{0}b_{1} \left(\frac{e^{(\theta + b_{1})\widetilde{\ell}_{d}} - e^{(\theta + b_{1})\widetilde{\ell}_{u}}}{\theta + b_{1}} + e^{b_{1} \widetilde{\ell}_{u}}\frac{e^{\theta \widetilde{\ell}_{u}} - e^{\theta \widetilde{\ell}_{d}}}{\theta} \right)1(b_{1} \neq 0) \nonumber\\ 
& \qquad \qquad \qquad \qquad + C_{0}\frac{\theta e^{\theta \tilde{\ell}_{d}}\left(\widetilde{\ell}_{d} - \widetilde{\ell}_{u}\right) + e^{\theta \widetilde{\ell}_{u}} - e^{\theta \widetilde{\ell}_{d}}}{\theta^{2}(\widetilde{\ell}_{u} - \widetilde{\ell}_{d})}1(b_{1} = 0) = \varphi_{2,1}(\theta),\\
\label{eqn:comp 12}
&\lim_{n \to \infty} \varphi_{1,2}^{(n)}(\theta) =  \frac{C_{0}\widetilde{\rho}_{12}}{b_{2}} b_{1}^{2} e^{b_{1} \widetilde{\ell}_{u}} \left(\frac{e^{\theta \widetilde{\ell}_{d}} - e^{\theta \widetilde{\ell}_{u} }}{\theta} +  \frac{ e^{\theta\widetilde{\ell}_{u}  + b_{2}(\widetilde{\ell}_{u}- \widetilde{\ell}_{d})} - e^{\theta \widetilde{\ell}_{d}}}{\theta + b_{2}} \right)1(b_{1} \neq 0) \nonumber\\
& \qquad \qquad \qquad \qquad +\dfrac{C_{0}\widetilde{\rho}_{12}}{b_{2}(\widetilde{\ell}_{u} -\widetilde{\ell}_{d})} \left(\frac{e^{\theta \widetilde{\ell}_{d}} - e^{\theta \widetilde{\ell}_{u} }}{\theta} + \frac{ e^{\theta\widetilde{\ell}_{u}  + b_{2}(\widetilde{\ell}_{u}- \widetilde{\ell}_{d})} - e^{\theta \widetilde{\ell}_{d}}}{\theta + b_{2}} \right)1(b_{1} = 0) = \varphi_{1,2}(\theta),\\
\label{eqn:comp 22}
&\lim_{n \to \infty} \varphi_{2,2}^{(n)}(\theta) = \frac{C_{0}\widetilde{\rho}_{12}}{b_{2}} b_{1}^{2} e^{b_{1} \widetilde{\ell}_{u}} (1 - e^{b_{2}(\widetilde{\ell}_{u}- \widetilde{\ell}_{d})})  \frac{e^{\theta \widetilde{\ell}_{u}}}{\theta + b_{2}} 1(b_{1} \neq 0) \nonumber\\
& \qquad \qquad \qquad \qquad + \dfrac{C_{0}\widetilde{\rho}_{12}}{b_{2}(\widetilde{\ell}_{u} -\widetilde{\ell}_{d})} (1 - e^{b_{2}(\widetilde{\ell}_{u}- \widetilde{\ell}_{d})})  \frac{e^{\theta \widetilde{\ell}_{u}}}{\theta + b_{2}} 1(b_{1} =0)  = \varphi_{2,2}(\theta).
\end{align}
Since \eqn{comp 12} and \eqn{comp 22} are derived similarly, we only prove \eqn{comp 21} in \app{det comp}.
Thus, for all $i,j = 1,2$, we have $\displaystyle{\lim_{n \to \infty} \varphi_{i,j}^{(n)}(\theta) = \varphi_{i,j}(\theta)}$, which completes the proof of \thr{main}.
\end{proof*}

\section{Approximations and numerical examples}
\setnewcounter
\label{sec:app and num}

In this section, we derive approximation formulas from our main theorem and evaluate their accuracy using numerical examples. 
We first obtain the convergence of the mean.
\begin{lemma}
\label{lem:mean dif}
Under the same  assumptions of \thr{main}, 
\begin{align}
\label{eqn:mean dif}
\lim_{n \to \infty} \mathbb{E}\left(\frac{1}{\sqrt{n}} L^{(n)}\right) &= \int_{0}^{\infty} x \nu(dx),
\end{align}
where the integral on the right-hand side is the Lebesgue integral with respect to the probability measure $\nu$. 
\end{lemma}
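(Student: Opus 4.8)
The plan is to upgrade the weak convergence $\nu^{(n)}\Rightarrow\nu$ established in \thr{main} to convergence of first moments, and the standard device for this is uniform integrability of the family $\{\frac{1}{\sqrt n}L^{(n)}\}_{n\in\mathbb{N}}$. Note first that $L^{(n)}\ge 0$ almost surely, so only the right tail matters, and that for each fixed $n$ the stationary law of $L^{(n)}$ has a geometric tail (by stability $\rho_2^{(n)}<1$) and hence finite mean; consequently it suffices to bound the tails of $\frac{1}{\sqrt n}L^{(n)}$ uniformly over all large $n$. Note also that the right-hand side of \eqn{mean dif} is a finite number: the components $f_{1,1},f_{2,1},f_{1,2}$ have bounded support and $f_{2,2}$ decays like $e^{b_2x}$ with $b_2<0$, so $\int_0^\infty x\,\nu(dx)<\infty$.

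First I would exhibit a single $\theta_0>0$ with $\sup_{n\ge N}\mathbb{E}\big(e^{\theta_0 L^{(n)}/\sqrt n}\big)<\infty$ for some $N$. Since $b_2<0$ in Assumption \enu{rho}, the interval $(0,-b_2)$ has positive length; choose $\theta_0\in(0,-b_2)$ with $\theta_0\neq -b_1$ (this removes at most one point). From $\rho_2^{(n)}=1+\frac{b_2}{\sqrt n}+o(\frac{1}{\sqrt n})$ in \eqn{scale rho} we get $\log(\rho_2^{(n)})^{-1}=-\frac{b_2}{\sqrt n}+o(\frac{1}{\sqrt n})$, so $\frac{\theta_0}{\sqrt n}<\log(\rho_2^{(n)})^{-1}$ for all sufficiently large $n$; by \eqn{psi 22} and \eqn{psi 22 1} this makes $\varphi_{2,2}^{(n)}(\theta_0)=\psi_{2,2}^{(n)}(\theta_0/\sqrt n)$ finite, while $\varphi_{1,1}^{(n)}(\theta_0)$, $\varphi_{2,1}^{(n)}(\theta_0)$, $\varphi_{1,2}^{(n)}(\theta_0)$ are finite for every $n$. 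The convergences \eqn{varphi n 11}, \eqn{comp 21}, \eqn{comp 12}, \eqn{comp 22} already proved in \thr{main} then give $\varphi^{(n)}(\theta_0)=\sum_{i,j=1,2}\varphi_{i,j}^{(n)}(\theta_0)\to\sum_{i,j=1,2}\varphi_{i,j}(\theta_0)<\infty$, so $\mathbb{E}(e^{\theta_0 L^{(n)}/\sqrt n})=\varphi^{(n)}(\theta_0)$ is bounded over $n\ge N$ by some constant $C$.

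Given this exponential bound, uniform integrability is routine: using the elementary inequality $x\le\frac{2}{\theta_0}e^{\theta_0 x/2}$ valid for $x\ge0$, one gets for every $K>0$ and $n\ge N$ that $\mathbb{E}\big(\frac{1}{\sqrt n}L^{(n)}\,1(\frac{1}{\sqrt n}L^{(n)}>K)\big)\le\frac{2}{\theta_0}e^{-\theta_0K/2}\,\mathbb{E}\big(e^{\theta_0 L^{(n)}/\sqrt n}\big)\le\frac{2C}{\theta_0}e^{-\theta_0K/2}$, which tends to $0$ as $K\to\infty$ uniformly in $n\ge N$; adjoining the finitely many integrable variables with $n<N$ does not destroy uniform integrability. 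Combining the weak convergence of \thr{main} with this uniform integrability, via the classical fact that weak convergence together with uniform integrability implies convergence of expectations (e.g., through the Skorokhod representation theorem and Vitali's convergence theorem), yields $\mathbb{E}(\frac{1}{\sqrt n}L^{(n)})\to\int_{\mathbb R}x\,\nu(dx)=\int_0^\infty x\,\nu(dx)$, since $\nu$ is supported on $[0,\infty)$.

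I expect the only delicate point to be the treatment of the component $\nu_{2,2}^{(n)}$: its moment generating function $\psi_{2,2}^{(n)}$ is finite only on the half-line $(-\infty,\log(\rho_2^{(n)})^{-1})$, whose right endpoint shrinks to $0$ at rate $-b_2/\sqrt n$, so one must verify that the fixed scaled argument $\theta_0/\sqrt n$ remains inside it for all large $n$ — this is precisely where the heavy-traffic scaling $\rho_2^{(n)}=1+b_2/\sqrt n+o(1/\sqrt n)$ with $b_2<0$ is essential. Everything else reduces to bookkeeping already carried out in the proof of \thr{main}. An alternative route avoids uniform integrability altogether: write $\mathbb{E}(\frac{1}{\sqrt n}L^{(n)})=(\varphi^{(n)})'(0)$; since each $\varphi^{(n)}$ extends to an analytic function on a complex neighbourhood of $0$ and $\varphi^{(n)}\to\varphi$ pointwise on the real interval $(0,-b_2)$ while staying locally bounded there, Vitali's convergence theorem gives $(\varphi^{(n)})'(0)\to\varphi'(0)=\int_0^\infty x\,\nu(dx)$ — but this still relies on the same local-boundedness estimate, so the uniform-integrability argument seems cleanest.
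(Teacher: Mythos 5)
Your proposal is correct and reaches the same conclusion as the paper, but by a genuinely different mechanism for establishing the crucial uniform integrability.

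The paper's proof works directly on the tail: it observes that for $\alpha \ge \widetilde{\ell}_{u} + 1$ and $n$ large, $\{L^{(n)}/\sqrt{n} \ge \alpha\}$ falls inside $S_{2,2}^{(n)}$ where the stationary weights are an explicit geometric sequence $\pi^{(n)}(\ell,2) = c(n)(\rho_{2}^{(n)})^{\ell - \ell_{u}^{(n)} + 1}$; it then computes $\mathbb{E}\bigl(\frac{1}{\sqrt n}L^{(n)}1(\frac{1}{\sqrt n}L^{(n)}\ge\alpha)\bigr)$ in closed form, uses \lem{n pin 0} to bound $\sup_n \sqrt{n}\,c(n)$, and handles the ranges $n\ge n_0$ and $n<n_0$ separately to show the tail vanishes uniformly. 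Your proposal instead reuses the moment-generating-function machinery already built for \thr{main}: picking $\theta_0\in(0,-b_2)$ with $\theta_0\neq -b_1$, verifying via Assumption~\enu{rho} that $\theta_0/\sqrt n < \log(\rho_2^{(n)})^{-1}$ for large $n$ (so $\varphi_{2,2}^{(n)}(\theta_0)$ is finite), invoking the pointwise convergence $\varphi^{(n)}(\theta_0)\to\varphi(\theta_0)<\infty$ to get a uniform bound on the MGF, and then deriving uniform integrability from the elementary inequality $x\,1(x>K)\le\frac{2}{\theta_0}e^{-\theta_0 K/2}e^{\theta_0 x}$. Both routes are sound. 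Yours is cleaner in the sense of avoiding a fresh explicit tail computation and piggybacking on work already done; the paper's is more self-contained and does not require noticing that the MGF convergence in the proof of \thr{main} extends to a positive $\theta_0<-b_2$ (a point you correctly flag as the only delicate step, and which does hold because the expansions \eqn{varphi n 11} and \eqn{comp 21}--\eqn{comp 22} are valid for any fixed real $\theta$ away from the singular points $-b_1,-b_2$). Your closing remark about differentiating the MGF via Vitali is a reasonable alternative framing but, as you say, relies on the same local boundedness, so adds nothing essential.
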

\begin{proof}
From \thr{main}, it is enough to verify the uniform integrability of the sequence of random variables $\left\{\dfrac{1}{\sqrt{n}} L^{(n)}\right\}$.
From Assumption \enu{ell con}, for any $\alpha \ge \widetilde{\ell_{u}} + 1$ and sufficiently large $n$, we have $(L^{(n)}, B^{(n)}) \in S_{2,2}^{(n)}$ if $\dfrac{1}{\sqrt{n}} L^{(n)} \ge \alpha$. Thus,
\begin{align*}
\lim_{n\to\infty} \mathbb{P}\left(\dfrac{1}{\sqrt{n}} L^{(n)} \ge \alpha \right) = \lim_{n\to\infty} \mathbb{P}\left(\dfrac{1}{\sqrt{n}} L^{(n)} \ge \alpha,   (L^{(n)}, B^{(n)}) \in S_{2,2}^{(n)}\right).
\end{align*}
Furthermore, by \rem{closed form}, the stationary distribution for the $n$-th system has the form
\begin{align*}
\pi^{(n)}(\ell,2) = c(n)(\rho_{2}^{(n)})^{\ell - \ell_{u}^{(n)}+1}, \qquad \ell \ge \ell_{u}^{(n)},
\end{align*} 
where $c(n)$ is given by
\begin{align*}
\left(\dfrac{(\rho_{1}^{(n)})^{\ell_{u}^{(n)}} - (\rho_{1}^{(n)})^{\ell_{u}^{(n)}+1}}{1 - (\rho_{1}^{(n)})^{\ell_{u}^{(n)} - \ell_{d}^{(n)} + 2}}1(\rho_{1}^{(n)} \neq 1) +  \dfrac{1(\rho_{1}^{(n)} = 1)}{\ell_{u}^{(n)}- \ell_{d}^{(n)} + 2}\right)\dfrac{1 - (\rho_{2}^{(n)})^{\ell_{u}^{(n)} - \ell_{d}^{(n)}}}{1 - \rho_{2}^{(n)}} \rho_{12}^{(n)} \pi^{(n)}(0,1).
\end{align*}
Hence, 
\begin{align*}
\mathbb{E}\left(\dfrac{1}{\sqrt{n}} L^{(n)}1\left(\dfrac{1}{\sqrt{n}} L^{(n)}\ge \alpha \right) \right) &= \frac{1}{\sqrt{n}} \sum_{\ell=\lceil \sqrt{n} \alpha \rceil}^{\infty} \ell \pi^{(n)}(\ell,2) \\
& = \sqrt{n}c(n) \left(\frac{\lceil \sqrt{n} \alpha \rceil}{n(1 - \rho_{2}^{(n)})}(\rho_{2}^{(n)})^{\lceil \sqrt{n} \alpha \rceil}  + \frac{(\rho_{2}^{(n)})^{\lceil \sqrt{n} \alpha \rceil + 1}}{n(1 - \rho_{2}^{(n)})^{2}}\right),
\end{align*}
where $\lceil x \rceil = \min\{y \in \mathbb{Z}; y \ge x\}$ for $x \in \mathbb{R}$.
It follows from \lem{n pin 0} and the proof of \thr{main} that $\sqrt{n} c(n)$ converges to a positive constant. Thus, there exists a constant $K$ such that 
\begin{align*}
\sup_{n \in \mathbb{N}} \sqrt{n} c(n) < K.
\end{align*} 
For sufficiently large $n_{0}$, we have for $n \ge n_{0}$,
\begin{align}
\label{eqn:uni two eq1}
&\frac{\lceil \sqrt{n} \alpha \rceil}{n(1 - \rho_{2}^{(n)})}(\rho_{2}^{(n)})^{\lceil \sqrt{n} \alpha \rceil} = \frac{(1 + o(1)) \lceil \alpha \rceil }{-b_{2} + o(1)}(e^{b_{2}\alpha} + o(1)),\\
\label{eqn:uni two eq2}
&\frac{(\rho_{2}^{(n)})^{\lceil \sqrt{n} \alpha \rceil + 1}}{n(1 - \rho_{2}^{(n)})^{2}} = \frac{e^{b_{2}\alpha} + o(1)}{\left(-b_{2} + o(1) \right)^{2}}.
\end{align}
Since $b_{2} < 0$ under Assumption \enu{rho}, for any $\epsilon > 0$, there exists a $\beta$ such that the right-hand sides of \eqn{uni two eq1} and \eqn{uni two eq2} are not greater than $\dfrac{\epsilon}{2K}$ for any $\alpha \ge \beta$.
In addition, by the condition $\rho_{2}^{(n)} < 1$, we can choose a sufficiently large $\gamma$ such that, for $\alpha \ge \gamma$ and all $n < n_{0}$,  
\begin{align*}
\frac{\lceil \sqrt{n} \alpha \rceil}{n(1 - \rho_{2}^{(n)})}(\rho_{2}^{(n)})^{\lceil \sqrt{n} \alpha \rceil} < \frac{\epsilon}{2K}, \quad \quad \frac{(\rho_{2}^{(n)})^{\lceil \sqrt{n} \alpha \rceil + 1}}{n(1 - \rho_{2}^{(n)})^{2}} < \frac{\epsilon}{2K}.
\end{align*}
Combining these observations, we can conclude that
\begin{align*}
\lim_{\alpha \to \infty} \sup_{n \in \mathbb{N}} \mathbb{E}\left(\dfrac{1}{\sqrt{n}} L^{(n)}1\left(\dfrac{1}{\sqrt{n}} L^{(n)}\ge \alpha \right) \right) = 0.
\end{align*}
We complete the proof.
\end{proof}

Moreover, from \eqn{pi n 0 b1 neq 0}, \eqn{pi n 0 b1 = 0}, and \eqn{mean dif}, we readily obtain the following result.
\begin{corollary}
\label{cor:asy Ln}
The mean queue length $\mathbb{E}(L^{(n)})$ has the following asymptotic property:
\begin{align}
\label{eqn:mean app}
\lim_{n \to \infty} \pi^{(n)}(0,1) \mathbb{E}(L^{(n)}) &= \lim_{n \to \infty} \sqrt{n} \pi^{(n)}(0,1) \mathbb{E}\left(\frac{1}{\sqrt{n}}L^{(n)} \right) \nonumber\\
&= C_{0}(b_{1} (e^{b_{1}(\widetilde{\ell}_{u} - \widetilde{\ell}_{d})} - 1 )1(b_{1} \neq 0) + 1(b_{1} = 0)) \int_{0}^{\infty} x \nu (dx). 
\end{align}
\end{corollary}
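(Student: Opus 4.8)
The plan is to assemble the three ingredients cited just before the statement; the argument is essentially a one-line limit computation, so the work is mostly in checking that the pieces fit together.

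\textbf{Step 1 (the first equality).} I would observe that, by linearity of expectation, for every $n \in \mathbb{N}$,
\begin{align*}
\pi^{(n)}(0,1)\, \mathbb{E}(L^{(n)}) = \left(\sqrt{n}\, \pi^{(n)}(0,1)\right) \mathbb{E}\left(\frac{1}{\sqrt{n}} L^{(n)}\right).
\end{align*}
Hence the two limits appearing in \eqn{mean app} are literally the same quantity, and it remains only to evaluate it.

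\textbf{Step 2 (evaluating the limit).} From \lem{mean dif}, i.e.\ \eqn{mean dif}, the sequence $\mathbb{E}\bigl(\tfrac{1}{\sqrt{n}} L^{(n)}\bigr)$ converges to the finite number $\int_0^\infty x\, \nu(dx)$. From \eqn{pi n 0 b1 neq 0} and \eqn{pi n 0 b1 = 0}, the sequence $\sqrt{n}\, \pi^{(n)}(0,1)$ converges to
\begin{align*}
C_0\left(b_1\bigl(e^{b_1(\widetilde{\ell}_u - \widetilde{\ell}_d)} - 1\bigr)1(b_1 \neq 0) + 1(b_1 = 0)\right),
\end{align*}
where the two regimes $b_1 \neq 0$ and $b_1 = 0$ have been merged using the indicator functions. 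Since both sequences converge to finite limits, the product of the sequences converges to the product of the limits, which is precisely the right-hand side of \eqn{mean app}.

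\textbf{Expected obstacle.} There is no genuine difficulty here; the statement is a direct corollary, and the only points requiring care are bookkeeping ones: combining the $b_1 \neq 0$ and $b_1 = 0$ cases into a single indicator-based expression, and recognising that what is actually needed is convergence of the \emph{mean} $\mathbb{E}(\tfrac{1}{\sqrt{n}} L^{(n)})$, not merely the weak convergence of $\nu^{(n)}$ to $\nu$ from \thr{main}. That upgrade from weak convergence to convergence of the mean is exactly the content of \lem{mean dif}, whose proof already supplies the needed uniform integrability of $\{\tfrac{1}{\sqrt{n}} L^{(n)}\}$ via a tail estimate on the contribution from $S_{2,2}^{(n)}$.
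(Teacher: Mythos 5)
Your proof is correct and follows exactly the route the paper intends: the paper itself states that the corollary ``readily'' follows from \eqn{pi n 0 b1 neq 0}, \eqn{pi n 0 b1 = 0}, and \eqn{mean dif}, which is precisely the factorisation $\pi^{(n)}(0,1)\,\mathbb{E}(L^{(n)}) = \bigl(\sqrt{n}\,\pi^{(n)}(0,1)\bigr)\,\mathbb{E}\bigl(\tfrac{1}{\sqrt{n}}L^{(n)}\bigr)$ followed by taking the product of the two limits. Your closing remark correctly identifies that the only nontrivial input is \lem{mean dif}, which upgrades the weak convergence of \thr{main} to convergence of first moments.
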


\cor{asy Ln} implies that the growth rate of $\mathbb{E}(L^{(n)})$ and the decay rate of the idle probability $\pi^{(n)}(0,1) = \mathbb{P}(L^{(n)} = 0) $ are the same. 

We next present numerical examples to evaluate the performance of our approximation. 
From Assumption \enu{rho}, a practical approximation for the mean queue length can be derived from \eqn{mean dif}:
\begin{align}
\label{eqn:Ln sqrtn app}
\mathbb{E}(L^{(n)}) \thickapprox \frac{-b_{2}}{1 - \rho_{2}^{(n)}} \int_{0}^{\infty} x \nu(dx),
\end{align}
where from \thr{main}, for $b_{1} \neq 0$, 
\begin{align*}
\int_{0}^{\infty} x \nu (dx) &= \int_{0}^{\widetilde{\ell}_{d}} x f_{1,1}(x) dx + \int_{\widetilde{\ell}_{d}}^{\widetilde{\ell}_{u}} x (f_{2,1}(x) + f_{1,2}(x)) dx + \int_{\widetilde{\ell}_{u}}^{\infty} x f_{2,2}(x) dx\\
&= \left(1 - e^{b_{1}(\widetilde{\ell}_{u} - \widetilde{\ell}_{d})} + b_{1}(\widetilde{\ell}_{u} - \widetilde{\ell}_{d})e^{b_{1}\widetilde{\ell}_{u}}- \dfrac{b_{1}^{2}}{b_{2}}(\widetilde{\ell}_{u} - \widetilde{\ell}_{d})\widetilde{\rho}_{12} e^{b_{1} \widetilde{\ell}_{u}}\right)^{-1}\\
& \qquad \qquad \left(\frac{1}{b_{1}}(e^{b_{1}(\widetilde{\ell}_{u} - \widetilde{\ell}_{d})} - 1) + e^{b_{1} \widetilde{\ell}_{u}}\left(\frac{b_{1}(\widetilde{\ell}_{u}^{2} - \widetilde{\ell}_{d}^{2})}{2} - \widetilde{\ell}_{u} + \widetilde{\ell}_{d} \right) \right.\\
& \qquad \qquad \left.+ \frac{b_{1}^{2}\widetilde{\rho}_{12} e^{b_{1} \widetilde{\ell}_{u}}}{b_{2}^{2}} \left(\widetilde{\ell}_{u} - \widetilde{\ell}_{d} - \frac{b_{2}(\widetilde{\ell}_{u}^{2} - \widetilde{\ell}_{d}^{2})}{2} \right)\right),
\end{align*}
and for $b_{1} = 0$, 
\begin{align*}
\int_{0}^{\infty} x \nu (dx) 
= \left(\dfrac{\widetilde{\ell}_{u} + \widetilde{\ell}_{d}}{2} - \dfrac{\widetilde{\rho}_{12}}{b_{2}} \right)^{-1}\left(\frac{\widetilde{\ell}_{u}^{2} + \widetilde{\ell}_{u}\widetilde{\ell}_{d} + \widetilde{\ell}_{d}^{2}}{6} + \frac{\widetilde{\rho}_{12}  }{b_{2}^{2}} \left(1 - \frac{b_{2}(\widetilde{\ell}_{u} + \widetilde{\ell}_{d})}{2} \right)\right).
\end{align*}
We first examine the accuracy of approximation \eqn{Ln sqrtn app} for different values of $n$.
\begin{table}[h]
\centering
\caption{Numerical accuracy of approximation \eqn{Ln sqrtn app} for $n = 10,10^{2},10^{3},10^{4}$.}
\begin{tabular}{c|cccc}
   $n$ & $10$ & $10^{2}$ & $10^{3}$ & $10^{4}$ \\
\hline 
   $\rho_{1}^{(n)}$ & $1.31623$	& $1.1$	&$1.03162$ & $1.01$\\
\hline
   $\rho_{2}^{(n)}$ & $0.683772$ & $0.9$ &$0.968377$ &$0.99$\\
\hline
   $\rho_{12}^{(n)}$ & $0.483772$ &$0.7$ &$0.768377$ &$0.79$\\
\hline
   $\mathbb{E}(L^{(n)})$ & $18.5982$ & $62.6715$ &$201.009$ & $640.299$\\
\hline
   Approximation \eqn{Ln sqrtn app} & $20.296$ &$64.1817$ &$202.96$ & $641.817$
\\
\hline
\end{tabular}
\end{table}
Table 1 summarizes the results.
We set the following parameters for this experiment:
\begin{align}
\label{eqn:set}
b_{1} = 1, \ b_{2} = -1, \ \widetilde{\ell}_{d} = 3, \ \widetilde{\ell}_{u} = 10, \ \rho_{12}^{(n)} = 0.8 - \frac{1}{\sqrt{n}} \to 0.8 \ (n \to \infty).
\end{align}
The setup set corresponds to the case $\rho_{1}^{(n)} > 1$.
As expected, the accuracy of the approximation improves as $n$ increases.
Moreover, the approximation accuracy appears to be reasonably good even for small $n$. 
\fig{graph n} (left panel) further illustrates this convergence by plotting the exact and approximate values for $n$ ranging from $1$ to $10^{3}$. The two curves are nearly indistinguishable, highlighting the high accuracy of our approximation across the entire range.

\begin{figure}[h]
 	\centering
	\includegraphics[height=0.17\textheight]{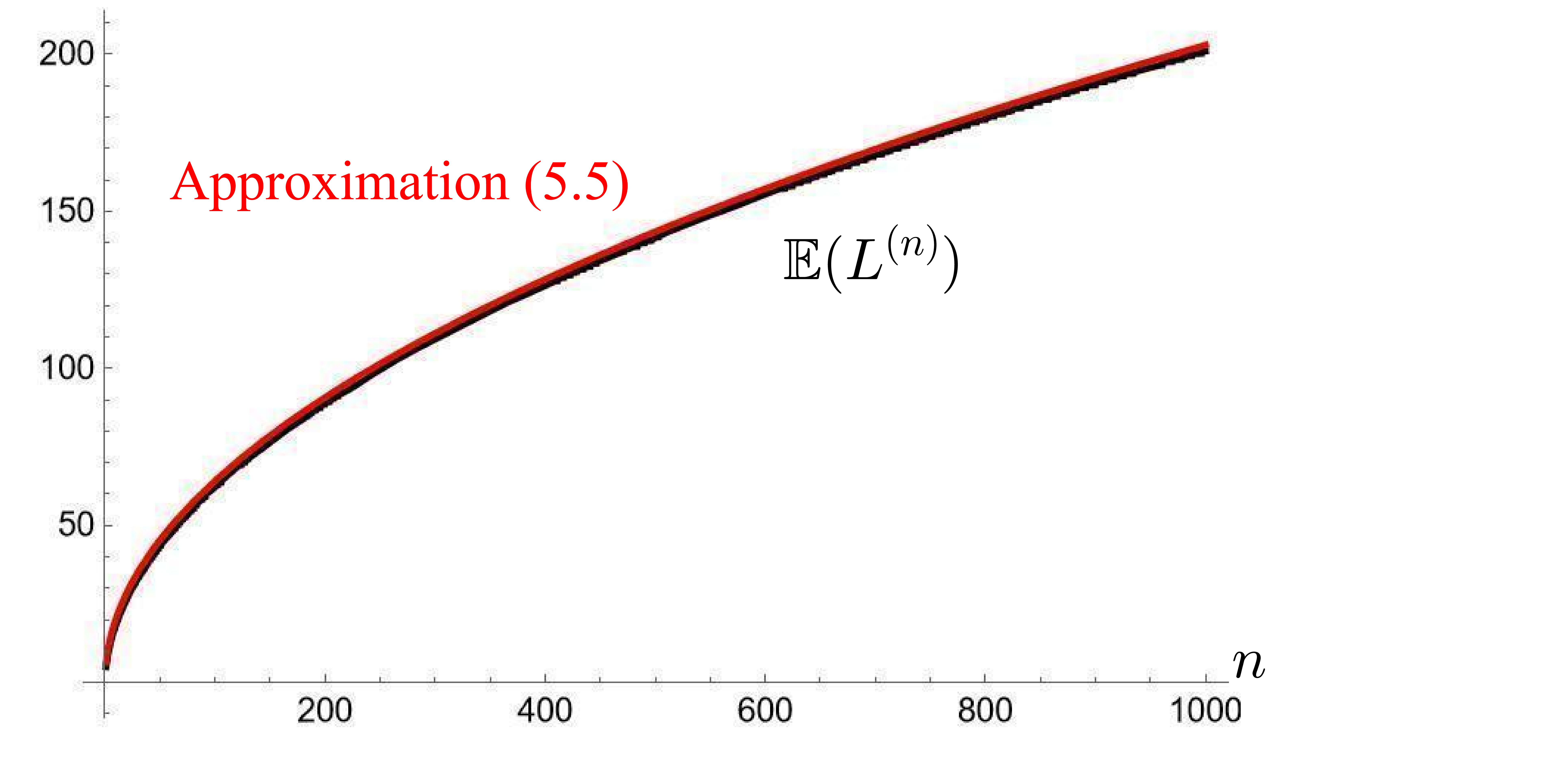}
	\includegraphics[height=0.17\textheight]{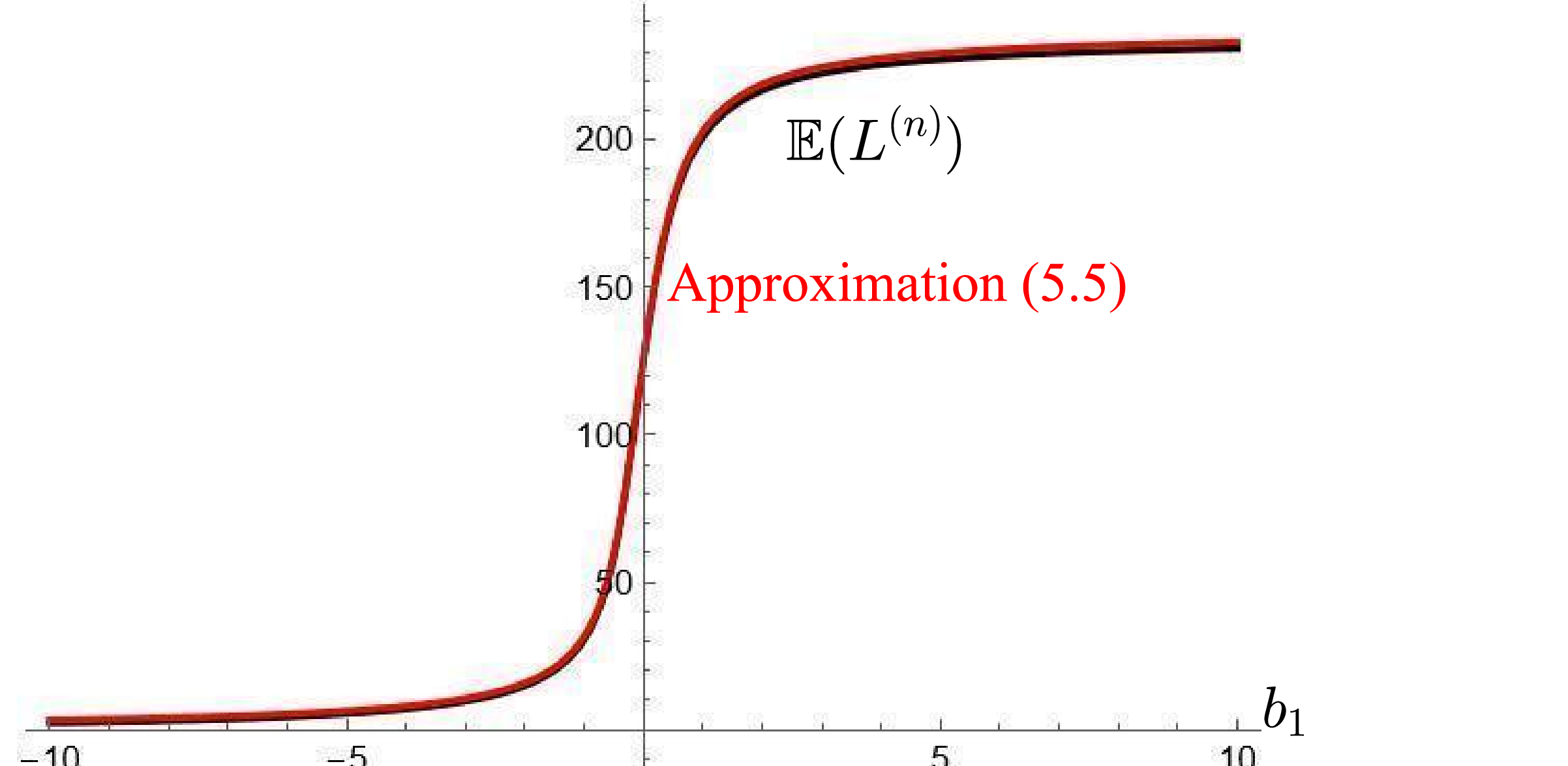}
	\caption{Graphs of changing $n$ and $b_{1}$}
	\label{fig:graph n}
\end{figure}

Finally, we investigate the effect of the parameter $b_{1}$ on the mean queue length, as shown in \fig{graph n} (right panel).
For this experiment, we fix the diffusion parameter at $n = 1000$ and vary $b_{1}$ from $-10$ to $10$. The other parameters are unchanged from \eqn{set}. From the graph, we observe that the approximation maintains excellent accuracy across the entire range of $b_1$ values.
These numerical results suggest that our approximation can be a useful and reliable tool for practical applications.

\section{Concluding remarks}
\setnewcounter
\label{sec:con}
In this paper, we analyzed the M/M/1 queue where the arrival and service rates depend on both the current queue length (level) and its recent history. We first derived the stationary distribution in closed form. We then obtained its heavy traffic diffusion limit, proving that the sequence of stationary distributions for the scaled queue length converges weakly to a limiting distribution with the density function $f$. The density function $f$ is noteworthy as some of its components are neither exponential nor uniform, a direct consequence of the history-dependent structure. Finally, we used our diffusion limit to construct a practical approximation for the mean queue length and demonstrated its effectiveness and reliability through numerical examples.

This study opens several avenues for future research. A natural next step is to extend our analysis beyond the Markovian setting. The paper \cite{KobaMiyaSaku2025} considers a multi-level GI/G/$1$ queue and obtains the diffusion limit of its stationary distribution using the stationary equation and the test function, also known as a BAR approach. We conjecture that the BAR approach can also be applied to our history-dependent framework. Therefore, investigating a history-dependent GI/G/1 queue is a promising direction for future work. Another potential extension is to consider a more complex background process with a larger state space, allowing for more intricate forms of history dependence.

\section*{Acknowledgment}
The first author (M. Kobayashi) was supported in part by JSPS KAKENHI Grant Number JP19K11845, and the third author (Y. Sakuma) was supported in part by JSPS KAKENHI Grant Number JP22K11927. This work was also supported by the Research Institute for Mathematical Sciences (RIMS), an International Joint Usage/Research Center at Kyoto University.

\appendix
\section{Proof of \thr{sta exact}}
\label{app:pi}
\setnewcounter
From the balance equation \eqn{sta1} for $k = 2$, we have
\begin{align*}
\pi(\ell,2) = \rho_{2}^{\ell - (\ell_{u}+ 1)} \pi(\ell_{u} + 1,2), \qquad \ell \ge \ell_{u} + 1.
\end{align*}
Substituting this expression into \eqn{sta6}, we have 
\begin{align*}
\pi(\ell_{u} + 1,2) =  \rho_{2} \pi(\ell_{u},2) + \rho_{12} \pi(\ell_{u},1).
\end{align*}
From \eqn{sta1} again, it follows that
\begin{align*}
\pi(\ell,2) &= \rho_{2}^{\ell - \ell_{d}} \pi(\ell_{d},2) + \sum_{j=0}^{\ell - (\ell_{d} + 1)} \rho_{2}^{j} \rho_{12} \pi(\ell_{u},1),\qquad \ell_{d} \le \ell \le \ell_{u} + 1.
\end{align*}
From \eqn{sta4} and \eqn{sta6}, we have 
\begin{align}
\label{eqn:staA1}
\pi(\ell_{d},2) = \rho_{12} \pi(\ell_{u},1).
\end{align}
On the other hand, from \eqn{sta5}, 
\begin{align*}
\pi(\ell_{u},1) = \frac{\lambda_{1}}{\lambda_{1} + \mu_{1}}\pi(\ell_{u}-1,1) = \frac{\rho_{1}}{1 + \rho_{1}}  \pi(\ell_{u}-1,1).
\end{align*}
It follows from this and \eqn{sta1} that 
\begin{align*}
\pi(\ell_{u}-1,1) = \frac{\lambda_{1}}{\lambda_{1} + \frac{\mu_{1}}{1 + \rho_{1}}} \pi(\ell_{u}-2,1) = \frac{\rho_{1} + \rho_{1}^{2}}{1 + \rho_{1} + \rho_{1}^{2}} \pi(\ell_{u}-2,1).
\end{align*}
Hence, for $\ell_{d}-1 \le \ell \le \ell_{u}$, we can inductively obtain 
\begin{align}
\label{eqn:sta7}
\pi(\ell,1) &= \frac{\rho_{1} + \rho_{1}^{2} + \dots + \rho_{1}^{\ell_{u} - (\ell - 1)}}{1 + \rho_{1} + \dots  + \rho_{1}^{\ell_{u} - (\ell-1)}} \pi(\ell-1,1) \nonumber\\
&= \frac{\rho_{1}^{2} + \rho_{1}^{3} + \dots + \rho_{1}^{\ell_{u}  -(\ell-2)}}{1 + \rho_{1} + \dots  + \rho_{1}^{\ell_{u} - (\ell-2)}} \pi(\ell-2,1)\nonumber\\
&= \dots \nonumber\\
&= \frac{\rho_{1}^{\ell - \ell_{d}+1} + \rho_{1}^{\ell - \ell_{d}+2}+ \dots + \rho_{1}^{\ell_{u} - \ell_{d}+1}}{1 + \rho_{1} + \dots +  \rho_{1}^{\ell_{u} - \ell_{d} +1}} \pi(\ell_{d}-1,1).
\end{align}
From \eqn{sta3}, \eqn{staA1} and \eqn{sta7}, we have 
\begin{align*}
&(\lambda_{1} + \mu_{1} )\pi(\ell_{d}-1,1) \\
& \qquad = \lambda_{1} \pi(\ell_{d}- 2,1) + \lambda_{1} \frac{1 + \rho_{1} + \dots + \rho_{1}^{\ell_{u} - \ell_{d}}}{1 + \rho_{1} + \dots +  \rho_{1}^{\ell_{u} - \ell_{d} + 1}} \pi(\ell_{d}-1,1) \\
& \qquad \qquad + \lambda_{1} \frac{\rho_{1}^{\ell_{u} - \ell_{d}+1}}{1 + \rho_{1} + \dots +  \rho_{1}^{\ell_{u} - \ell_{d} + 1}} \pi(\ell_{d}-1,1)\\
& \qquad = \lambda_{1} \pi(\ell_{d}- 2,1) + \lambda_{1} \pi(\ell_{d}-1,1).
\end{align*}
Hence, 
\begin{align*}
\pi(\ell_{d}-1,1)= \frac{\lambda_{1}}{\mu_{1}}\pi(\ell_{d}-2,1) = \rho_{1} \pi(\ell_{d}-2,1).
\end{align*}
Based on this result, and from \eqn{sta1} and \eqn{sta2}, we can conclude by induction that
\begin{align*}
\pi(\ell,1) = \rho_{1}^{\ell} \pi(0,1), \qquad \text{for } 0 \le \ell \le \ell_{d}-1.
\end{align*}
The full expressions in Theorem \ref{thr:sta exact} are then obtained by substituting these intermediate results back into one another. This completes the proof.

\section{Computation of \eqn{comp 21}}
\label{app:det comp}

Clearly, we first have 
\begin{align*}
\varphi_{2,1}(\theta) &= \int_{\widetilde{\ell}_{d}}^{\widetilde{\ell}_{u}} e^{\theta x}  C_{0} \left( b_{1}(e^{b_{1} \widetilde{\ell}_{u}} -e^{b_{1} x})1(b_{1} \neq 0)  +  \frac{\widetilde{\ell}_{u} - x}{\widetilde{\ell}_{u} - \widetilde{\ell}_{d}} 1(b_{1} = 0)\right) dx\\
&= \left[C_{0}b_{1}\left(e^{b_{1} \widetilde{\ell}_{u}}\frac{e^{\theta x}}{\theta} - \frac{e^{(\theta + b_{1})x}}{\theta + b_{1}}\right)1(b_{1} \neq 0) + \left(\frac{\widetilde{\ell}_{u} e^{\theta x}}{\theta(\widetilde{\ell}_{u} - \widetilde{\ell}_{d})}  - \frac{\theta x e^{\theta x} - e^{\theta x}}{\theta^{2}(\widetilde{\ell}_{u} - \widetilde{\ell}_{d})} \right)1(b_{1} = 0)\right]_{\widetilde{\ell}_{d}}^{\widetilde{\ell}_{u}}\\
&= C_{0}b_{1} \left(\frac{e^{(\theta + b_{1})\widetilde{\ell}_{d}} - e^{(\theta + b_{1})\widetilde{\ell}_{u}}}{\theta + b_{1}} - e^{b_{1} \widetilde{\ell}_{u}}\frac{e^{\theta \widetilde{\ell}_{d}} - e^{\theta \widetilde{\ell}_{u}}}{\theta} \right)1(b_{1} \neq 0) \\
& \qquad \qquad \qquad + C_{0}\frac{\theta e^{\theta \tilde{\ell}_{d}}\left(\widetilde{\ell}_{d} - \widetilde{\ell}_{u}\right) + e^{\theta \widetilde{\ell}_{u}} - e^{\theta \widetilde{\ell}_{d}}}{\theta^{2}(\widetilde{\ell}_{u} - \widetilde{\ell}_{d})} 1(b_{1} = 0). 
\end{align*}

We next compute $\varphi_{2,1}^{(n)}$ as $n \to \infty$. 
For the case $b_{1} \neq 0$, it follows from \eqn{psi 21} and  \eqn{d-1 eq} and \eqn{pi n 0 b1 neq 0} that
\begin{align*}
\lim_{n \to \infty} \varphi_{2,1}^{(n)}(\theta)  &= \lim_{n \to \infty} \psi_{2,1}^{(n)}\left(\frac{\theta}{\sqrt{n}} \right) \nonumber\\
& = \lim_{n \to \infty} \frac{\frac{(\rho_{1}^{(n)}e^{\frac{\theta}{\sqrt{n}}})^{\ell_{d}^{(n)}} - (\rho_{1}^{(n)} e^{\frac{\theta}{\sqrt{n}}})^{\ell_{u}^{(n)} + 1}}{1 - \rho_{1}^{(n)}e^{\frac{\theta}{\sqrt{n}}}} -  (\rho_{1}^{(n)})^{\ell_{u}^{(n)} + 1}\frac{e^{\frac{\theta}{\sqrt{n}} \ell_{d}^{(n)}} - e^{\frac{\theta}{\sqrt{n}}(\ell_{u}^{(n)} + 1)}}{1 - e^{\frac{\theta}{\sqrt{n}}}}}{1 - (\rho_{1}^{(n)})^{\ell_{u}^{(n)}-\ell_{d}^{(n)} + 2}}\pi^{(n)}(0,1) \nonumber\\
& = \frac{\frac{e^{(\theta + b_{1})\widetilde{\ell}_{d}} - e^{(\theta + b_{1})\widetilde{\ell}_{u}}}{\theta + b_{1}} - e^{b_{1} \widetilde{\ell}_{u}}\frac{e^{\theta \widetilde{\ell}_{d}} - e^{\theta \widetilde{\ell}_{u}}}{\theta}}{e^{b_{1}(\widetilde{\ell}_{u} - \widetilde{\ell}_{d})} - 1} \left(\lim_{n \to \infty} \sqrt{n} \pi^{(n)}(0,1) \right)\\
& = C_{0}b_{1} \left(\frac{e^{(\theta + b_{1})\widetilde{\ell}_{d}} - e^{(\theta + b_{1})\widetilde{\ell}_{u}}}{\theta + b_{1}} - e^{b_{1} \widetilde{\ell}_{u}}\frac{e^{\theta \widetilde{\ell}_{d}} - e^{\theta \widetilde{\ell}_{u}}}{\theta} \right),
\end{align*}
where we also use 
\begin{align*}
&1 - e^{\frac{\theta}{\sqrt{n}}} = -\frac{\theta}{\sqrt{n}} + o\left(\frac{1}{\sqrt{n}} \right), 
&&1 - \rho_{1}^{(n)} e^{\frac{\theta}{\sqrt{n}}} = -\frac{\theta + b_{1}}{\sqrt{n}} + o\left(\frac{1}{\sqrt{n}} \right), 
\end{align*}
as $n \to \infty$. 

For the case $\rho_{1}^{(n)} \neq 1$ and $b_{1} = 0$, 
\begin{align*}
&\left(\rho_{1}^{(n)}\right)^{\ell(n)}  = 1 + \ell(n)g_{1}(n) + \delta(n),
&e^{\frac{\theta}{\sqrt{n}}} = 1 + \frac{\theta}{\sqrt{n}} + g_{2}(n),
\end{align*}
where $\ell(n) \in \mathbb{N}$, $g_{1}(n) = o\left(\dfrac{1}{\sqrt{n}} \right)$ and $g_{2}(n) = o\left(\dfrac{1}{\sqrt{n}} \right)$ which may be different and $\delta(n)$ satisfies $\displaystyle{\lim_{n \to \infty} \frac{\delta(n)}{g_{1}(n)} } = 0$. In addition, 
\begin{align*}
1 - \rho_{1}^{(n)} e^{\frac{\theta}{\sqrt{n}}} &=  1 - (1 + g_{1}(n)+ \delta(n)) \left(1 + \frac{\theta}{\sqrt{n}} + g_{2}(n)\right)\\
&= -\frac{\theta}{\sqrt{n}} - g_{1}(n) -g_{2}(n) - \delta(n).
\end{align*}
Hence, 
\begin{align*}
&\varphi_{2,1}^{(n)}(\theta) \\
& \quad =\frac{\frac{(\rho_{1}^{(n)}e^{\frac{\theta}{\sqrt{n}}})^{\ell_{d}^{(n)}} - (\rho_{1}^{(n)} e^{\frac{\theta}{\sqrt{n}}})^{\ell_{u}^{(n)} + 1}}{1 - \rho_{1}^{(n)}e^{\frac{\theta}{\sqrt{n}}}} -  (\rho_{1}^{(n)})^{\ell_{u}^{(n)} + 1}\frac{e^{\frac{\theta}{\sqrt{n}} \ell_{d}^{(n)}} - e^{\frac{\theta}{\sqrt{n}}(\ell_{u}^{(n)} + 1)}}{1 - e^{\frac{\theta}{\sqrt{n}}}}}{1 - (\rho_{1}^{(n)})^{\ell_{u}^{(n)}-\ell_{d}^{(n)} + 2}}\pi^{(n)}(0,1)\\
& \quad =\frac{(\rho_{1}^{(n)}e^{\frac{\theta}{\sqrt{n}}})^{\ell_{d}^{(n)}}(1 - e^{\frac{\theta}{\sqrt{n}}})}{(1 - (\rho_{1}^{(n)})^{\ell_{u}^{(n)}-\ell_{d}^{(n)} + 2})(1 - \rho_{1}^{(n)}e^{\frac{\theta}{\sqrt{n}}})(1 - e^{\frac{\theta}{\sqrt{n}}})}\pi^{(n)}(0,1)\\
& \qquad - \frac{ (e^{\frac{\theta}{\sqrt{n}}})^{\ell_{d}^{(n)}}(\rho_{1}^{(n)})^{\ell_{u}^{(n)} + 1} (1 - \rho_{1}^{(n)} e^{\frac{\theta}{\sqrt{n}}}) }{(1 - (\rho_{1}^{(n)})^{\ell_{u}^{(n)}-\ell_{d}^{(n)} + 2})(1 - \rho_{1}^{(n)}e^{\frac{\theta}{\sqrt{n}}})(1 - e^{\frac{\theta}{\sqrt{n}}})}\pi^{(n)}(0,1)\\
& \qquad +\frac{  (\rho_{1}^{(n)} e^{\frac{\theta}{\sqrt{n}}})^{\ell_{u}^{(n)} + 1}e^{\frac{\theta}{\sqrt{n}}}(1-\rho_{1}^{(n)})}{(1 - (\rho_{1}^{(n)})^{\ell_{u}^{(n)}-\ell_{d}^{(n)} + 2})(1 - \rho_{1}^{(n)}e^{\frac{\theta}{\sqrt{n}}})(1 - e^{\frac{\theta}{\sqrt{n}}})}\pi^{(n)}(0,1)\\
& \quad =\frac{e^{\frac{\theta \ell_{d}^{(n)}}{\sqrt{n}}}(1 + \ell_{d}^{(n)} g_{1}(n) + \delta(n))\left(-\frac{\theta}{\sqrt{n}} - g_{2}(n) \right)\pi^{(n)}(0,1)  }{(- (\ell_{u}^{(n)}-\ell_{d}^{(n)} + 2)g_{1}(n) -\delta(n))\left(-\frac{\theta}{\sqrt{n}} - g_{1}(n) -g_{2}(n) - \delta(n) \right)\left(-\frac{\theta}{\sqrt{n}} - g_{2}(n) \right)}\\
& \qquad - \frac{e^{\frac{\theta{\ell_{d}^{(n)}}}{\sqrt{n}}}(1 +  (\ell_{u}^{(n)} + 1) g_{1}(n) + \delta(n)) \left(-\frac{\theta}{\sqrt{n}} - g_{1}(n) -g_{2}(n) - \delta(n) \right)\pi^{(n)}(0,1) }{(- (\ell_{u}^{(n)}-\ell_{d}^{(n)} + 2)g_{1}(n) -\delta(n))\left(-\frac{\theta}{\sqrt{n}} - g_{1}(n) -g_{2}(n) - \delta(n) \right)\left(-\frac{\theta}{\sqrt{n}} - g_{2}(n) \right)}\\
& \qquad + \frac{e^{\frac{\theta(\ell_{u}^{(n)} + 1)}{\sqrt{n}}}(1 + (\ell_{u}^{(n)} + 1 ) g_{1}(n) + \delta(n))\left(1 + \frac{\theta}{\sqrt{n}} + g_{2}(n) \right)(-g_{1}(n)-\delta(n))\pi^{(n)}(0,1)}{(- (\ell_{u}^{(n)}-\ell_{d}^{(n)} + 2)g_{1}(n) -\delta(n))\left(-\frac{\theta}{\sqrt{n}} - g_{1}(n) -g_{2}(n) - \delta(n) \right)\left(-\frac{\theta}{\sqrt{n}} - g_{2}(n) \right)}\\
&\quad =\frac{-g_{1}(n)( \theta e^{\theta \widetilde{\ell}_{d}}(\widetilde{\ell}_{d} - \widetilde{\ell}_{u}) + e^{\theta \widetilde{\ell}_{u}}- e^{\theta \widetilde{\ell}_{d}}) - \delta(n)}{-(\widetilde{\ell}_{u} - \widetilde{\ell}_{d})\theta^{2}g_{1}(n) - \delta(n)} \sqrt{n}\pi^{(n)}(0,1),
\end{align*}
as $n \to \infty$. Thus, by \eqn{pi n 0 b1 = 0}, we obtain
\begin{align}
\label{eqn:b1 = 0 rho1 neq 0}
\lim_{n \to \infty} \varphi_{2,1}^{(n)}(\theta) = C_{0}\frac{\theta e^{\theta \tilde{\ell}_{d}}\left(\widetilde{\ell}_{d} - \widetilde{\ell}_{u}\right) + e^{\theta \widetilde{\ell}_{u}} - e^{\theta \widetilde{\ell}_{d}}}{\theta^{2}(\widetilde{\ell}_{u} - \widetilde{\ell}_{d})}.
\end{align}
where, by \eqn{pi n 0 b1 = 0}, $\sqrt{n}\pi^{(n)}(0,1) \to C_{0}$ as $n \to \infty$ for the case $b_{1} = 0$.
 
Finally, we consider  the case $\rho_{1}^{(n)} = 1$ for any $n \ge n_{0}$ where $n_{0}$ is a certain integer. We then have
\begin{align*}
\varphi_{2,1}^{(n)}(\theta) &= \frac{(\ell_{u}^{(n)} - \ell_{d}^{(n)} + 1)e^{\frac{\theta}{\sqrt{n}} \ell_{d}^{(n)}} -\frac{e^{\frac{\theta}{\sqrt{n}}(\ell_{d}^{(n)} + 1)} - e^{\frac{\theta}{\sqrt{n}}(\ell_{u}^{(n)} + 2)}}{1 - e^{\frac{\theta}{\sqrt{n}}}}}{(\ell_{u}^{(n)} - \ell_{d}^{(n)} + 2)(1 -e^{\frac{\theta}{\sqrt{n}}})}\pi^{(n)}(0,1)\\
&= \frac{e^{\theta \widetilde{\ell}_{d}}(\widetilde{\ell}_{u} - \widetilde{\ell}_{d}) + \frac{e^{\widetilde{\ell}_{d}} - e^{\widetilde{\ell}_{u}}}{\theta}   + o\left(\frac{1}{\sqrt{n}}\right)}{-(\widetilde{\ell}_{u} - \widetilde{\ell}_{d})\theta + o\left(\frac{1}{\sqrt{n}} \right)} \sqrt{n}\pi^{(n)}(0,1), 
\end{align*}
and \eqn{b1 = 0 rho1 neq 0} is also obtained.

\end{document}